\newif\ifPDF
\newtheorem{theorem}{Theorem}[section]
\newtheorem{lemma}[theorem]{Lemma}
\newcommand{\eps}{\varepsilon}
\newcommand{\pdr}[2]
{\dfrac{\partial{#1}}{\partial{#2}}}
 \newcommand{\bbS}{\mathbb S}
\newcommand{\bzero}{{\mathbf 0}}
\newcommand{\btheta}{{\boldsymbol\theta}}
\newcommand{\bnu}{{\boldsymbol \nu}}
\newcommand{\bxi}{\boldsymbol \xi}
\newcommand{\bzeta}{\boldsymbol \zeta}
\newcommand{\bEta}{\boldsymbol \eta}
\newcommand{\bk}{\mathbf k} 
 \newcommand{\bn}{\mathbf n}
 \newcommand{\bp}{\mathbf p}
\newcommand{\bq}{\mathbf q} 
\newcommand{\bu}{\mathbf u} \newcommand{\bv}{\mathbf v} 
\newcommand{\bw}{\mathbf w} \newcommand{\bx}{\mathbf x} 
\newcommand{\by}{\mathbf y} \newcommand{\bz}{\mathbf z}
\DeclareMathOperator*{\argmin}{\arg\min} 
\newcommand{\chapterauthor}[1]{%
	{\parindent0pt\vspace*{-25pt}%
		\linespread{1.1}\large\scshape#1%
		\par\nobreak\vspace*{35pt}}
	\@afterheading%
}
\author{Hongkai Zhao\quad Yimin Zhong}
\title{A hybrid adaptive phase space method for reflection traveltime tomography}
\begin{document}
\maketitle
\begin{abstract}
We present a hybrid imaging method for a challenging travel time tomography problem which includes both unknown medium and unknown scatterers in a bounded domain. The goal is to recover both the medium and the boundary of the scatterers from the scattering relation data on the domain boundary. Our method is composed of three steps: 1) preprocess the data to classify them into three different categories of  measurements corresponding to non-broken rays, broken-once rays, and others, respectively, 2) use the the non-broken ray data and an effective  data-driven layer stripping strategy--an optimization based iterative imaging method--to recover the medium velocity outside the convex hull of the scatterers, and 3) use selected broken-once ray data to recover the boundary of the scatterers--a direct imaging method. By numerical tests, we show that our hybrid method can recover both the unknown medium and the not-too-concave scatterers efficiently and robustly. 
\end{abstract}
\section{Introduction}
Traveltime tomography is an important class of inverse problems which appear in various applications such as global seismology~\cite{kennett1991traveltimes,inoue1990whole,bishop1985tomographic,clarke20013d,minkoff1996computationally}, ocean acoustic tomography~\cite{munk1979ocean, munk2009ocean,collins1994inverse,jensen2011computational}, ultrasound tomography~\cite{hormati2010robust,schomberg1978improved,jin2006thermoacoustic} in biomedical imaging and so on. It determines the internal velocity of the medium by measuring the wave traveltime between points on the boundary.

Theoretically, the traveltime tomography is very closely related to boundary rigidity and lens rigidity problems in differential geometry~\cite{croke1991rigidity,frigyik2008x,stefanov2004stability,stefanov2005boundary,stefanov2008boundary,guillarmou2017lens,kurylev2010rigidity,pestov2004characterization,stefanov2012geodesic,pestov2005two}.  The boundary rigidity problem is to determine the metric of compact Riemannian manifold up to a diffeomorphism from first arrival time information, and the traveltime is the length of geodesic, which is also called ray in geometric optics context, connecting two points on boundary. The lens rigidity problem utilizes multiple arrival times information to determine the Riemann metric. The multiple arrival times are encoded in scattering relation which consists of incoming and outgoing points and directions as well as the traveltime.

For boundary rigidity, the uniqueness of reconstruction (up to an action of a diffeomorphism) is known for simple metrics (see~\cite{croke1991rigidity,mukhometov1981problem,michel1981rigidite,stefanov2005boundary,stefanov2005recent} and references therein) and many other cases~\cite{gromov1983filling,besson1995entropies,croke1990rigidity,lassas2003semiglobal,sharafutdinov1994integral}. A compact Riemannian manifold $(M,\partial M, g)$ is \emph{simple} if the boundary $\partial M$ is strictly convex with respect to its metric $g$ and there are no conjugate points along any geodesic. Moreover, for simple manifolds, the knowledge of scattering relation does not provide more information than boundary distance function. See~\cite{stefanov2007local,stefanov2008boundary,stefanov2008microlocal,guillarmou2017lens,stefanov2008integral} and references therein for recent progress on lens rigidity for non-simple manifolds. Numerically, there are many numerical algorithms motivated by the theoretical progress in boundary rigidity and lens rigidity problems,
see~\cite{chung2011adaptive,chung2008phase,chung2007new,leung2006adjoint,leung2007transmission,li2013fast,li2014level,glowinski2015penalization} for algorithmic developments. 

When there are strong scattering effects or impenetrable obstacles inside the medium, then the geodesics could be broken. In~\cite{kurylev2010rigidity}, Kurylev, Lassas and Ulhmann established a uniqueness result for reconstructing Riemannian metric from the broken scattering relation.  For reflective obstacles,  we consider an incident ray jointed with its corresponding reflected ray as a broken geodesic by imposing reflection condition at the joint point. When there is only one strictly convex obstacle inside the manifold, then under certain conditions such as simple manifolds of dimension $\ge 2$ with real analytic metric~\cite{krishnan2009support} or manifolds of dimension $\ge 3$ with convex hypersurface foliation~\cite{uhlmann2016inverse},  the Riemannian metric outside the obstacle can be uniquely recovered from all nonbroken rays by Helgason support theorem~\cite{helgason2011radon}. However, all of the proofs of uniqueness are not constructive, and another difficulty in practice would be how to efficiently distinguish the broken and nonbroken scattering relation in measurements. In~\cite{chung2011adaptive}, Chung, Qian, Ulhmann and Zhao proposed a numerical reconstruction algorithm which is able to distinguish nonbroken and broken rays by measuring mismatch in scattering relation data during each iteration, if a broken ray is falsely predicted as nonbroken one, then there could be an $O(1)$ mismatch in the data. 

In~\cite{chung2007new,chung2008phase,chung2011adaptive}, the authors have developed a phase-space approach for transmission and reflection traveltime tomography for acoustic and elastic media by using the Stefanov-Ulhmann identity formulated in~\cite{stefanov1998rigidity}. The method is advantageous over  traditional methods in inverse kinematic problems~\cite{romanov1987inverse,bishop1985tomographic,sei1994gradient,sei1995convergent,washbourne2002crosswell}, because it uses multiple arrival times systematically and has the potential to handle anisotropic metrics as well, while these traditional methods can only recover isotropic metrics by utilizing first arrival times. 

However, for the challenging case where both the medium and the scatterers are unknown, adaptive phase space method developed in \cite{chung2011adaptive} only uses non-broken ray to recover the medium outside the convex hull of the scatterers. In this work, we combine the adaptive phase space method, which is an optimization based iterative method, with a direct imaging method using selected broken-once ray data. This will give us the possibility to recover non-convex part of the boundary of the unknown scatterers. We also make several improvements that include preprocessing of the scattering relation data to classify data corresponding to non-broken rays, broken-once rays, and the others respectively and  improvements in efficiency and robustness for the adaptive phase method. 

Although Stefanov-Ulhmann identity can be used as feedback from computed metric to the exact metric, however, the identity itself is nonlinear. The linearization of Stefanov-Ulhmann identity will require the two metrics to be close enough, therefore the initial guess is critical for stable reconstruction. In our method, we first consider those geodesics with short traveltimes, by taking Taylor expansion for such geodesics, we can obtain  Dirichlet and Neumann data of the metric on boundary, then we can extrapolate the initial guess of metric from these boundary data and the initial guess should be quite close to the exact solution near boundary. For the construction, our method also follows the layer stripping idea, but quite different from \cite{chung2011adaptive}, which selects the rays according to smallness in mismatch and could end up with some long rays which may deviate from layer stripping process. In our method, we introduce an auxiliary fidelity function to guide the layer stripping process. It can seen from our numerical experiments in Section~\ref{sec:num} that the iteration number can be reduced and the reconstruction process is very stable. For the reflection traveltime tomography,  the method in~\cite{chung2011adaptive} will take more iterations and more time due to its trial and error strategy in distinguishing broken and nonbroken rays, while our method first preprocess the data and directly detect non-broken rays from the scattering relation by scanning discontinuities in derivatives. The non-broken rays can immediately be used to reconstruct the metric outside the convex hull of obstacles by Helgason support theorem~\cite{helgason2011radon}. Furthermore, when the obstacles are not large and not too concave or the metric does not vary too much near obstacle, then our method can be used to capture non-convex shape of the obstacles by tracking those rays which hit the obstacle in normal direction. Such rays will reverse their trace back to their initial location after reflection and provide a direct and stable way of locating points on boundaries of obstacles by tracing the ray to half of the traveltime (see the numerical experiments in Section~\ref{sec:num}). 


The paper is organized as follows: we introduce the mathematical formulation for reflection traveltime tomography and broken geodesics in Section~\ref{sec:2}. Then we describe our numerical algorithm and the hybrid method in Section~\ref{sec:3}. Test results of our method for different setups are presented in Section~\ref{sec:num}.

\section{Mathematical formulation for reflection traveltime tomography}\label{sec:2}
\subsection{Broken scattering relation}
Let $(M,g)$ be a compact Riemann manifold with boundary dimension of $d$, and denote $S(M)$ its unit tangent bundle. The scattering relation or lens relation~\cite{kurylev2010rigidity} is 
\begin{equation}
\begin{aligned}
\mathcal{L} = \{((\bx,\bxi), (\by, \bzeta), t)\in S(M)\times S(M)\times \mathbb{R}_{+}\cup \{0\}: \bx,\by \in \partial M, \\(\gamma_{\bx,\bxi}(t), \dot{\gamma}_{\bx,\bxi}(t)) = (\by,\bzeta)\text{ for some }t\ge 0\},
\end{aligned}
\end{equation}
where $\gamma_{\bx,\bxi}$ is the geodesic of $(M,g)$ starts from $\bx$ with direction $\bxi$ at $t=0$.

As defined in~\cite{kurylev2010rigidity}, a broken-once geodesic is a path $\alpha = \alpha_{\bx, \bxi, \bz,\bEta}(t)$ where $\bz = \gamma_{\bx, \bxi}(s) \in M$ for some $s\ge 0$, $\bEta\in S_{\bz}(M)$, and 
\begin{equation}
\alpha_{\bx,\bxi, \bz, \bEta}(t) = \begin{cases}
\gamma_{\bx,\bxi}(t),& \text{ for } t < s,\\
\gamma_{\bz,\bEta}(t- s), & \text{ for } t\ge s.            
\end{cases}
\end{equation}
The entering and exiting points of broken geodesics define the broken scattering relation~\cite{kurylev2010rigidity}
\begin{equation}
\begin{aligned}
\mathcal{R} = \{((\bx,\bxi), (\by, \bzeta), t)\in S(M)\times S(M)\times \mathbb{R}_{+}\cup\{0\}: (\bx,\bxi)\in \Gamma_{+}, (\by,\bzeta)\in \Gamma_{-}, \\t = l(\alpha_{\bx,\bxi, \bz,\bEta}),\text{ and } (\alpha_{\bx,\bxi, \bz,\bEta}(t), \dot{\alpha}_{\bx,\bxi, \bz,\bEta}(t)) = (\by,\bzeta) \text{ for some } (\bz,\bEta)\in S(M)\},
\end{aligned}
\end{equation}
where $ l(\alpha_{\bx,\bxi, \by,\bEta}) \in \mathbb{R}_{+}\cup\{\infty\}$ denotes the smallest $l>0$ that $\alpha_{\bx,\bxi, \by,\bEta} \in\partial M$. Let $\bnu$ be the interior unit normal vector  of $\partial M$ and we define the following incoming and outgoing subbundles:
\begin{equation}
\begin{aligned}
&\Gamma_{+} = \{(\bx, \bxi)\in S(M): \bx\in \partial M, \langle \bxi, \bnu\rangle_g > 0\},\\
&\Gamma_{-} = \{(\bx, \bxi)\in S(M): \bx\in \partial M, \langle \bxi, \bnu\rangle_g < 0\}.
\end{aligned}
\end{equation} 
Note that the scattering relation does not contain any information about the point $\bz$ or its corresponding direction $\bEta$ where the broken ray $\alpha_{\bx,\bxi, \bz,\bEta}$ changes its direction~\cite{kurylev2010rigidity}.

\subsection{Mathematical formulation}
Let $\Omega\subseteq \mathbb{R}^d$ be a compact domain and let $(g_{ij})$ be a Riemann metric on it. We define the Hamiltonian $H_g$ by
\begin{equation}
H_g(\bx,\bxi) = \frac{1}{2}\left(\sum_{1\le i, j\le d}g^{ij}(\bx)\xi_i\xi_j - 1\right),
\end{equation}
where $(g^{ij}) =(g_{ij})^{-1}$. Let $X^{(0)} = (\bx^{(0)}, \bxi^{(0)})$ be the initial condition belonging to the inflow set: 
\begin{equation}
\label{eq:inflow}
\mathcal{S}^{-} = \{(\bx,\bxi) \,|\, \bx\in\partial\Omega, H_g(\bx,\bxi) = 0,\, \sum_{1\le i,j\le d}g^{ij}\xi_i\nu_j < 0\},
\end{equation}
where $\bnu(x)$ is the unit outward normal vector at $\bx\in\partial\Omega$ and $\nu_j(\bx)$ is the $j$-th component of $\bnu(\bx)$. The geodesic $X_g(s, X^{(0)})$ satisfies following Hamiltonian system:
\begin{equation}\label{eq:hamiltonian}
\frac{d\bx}{ds} = \partial_{\bxi}H_{g},\quad \frac{d\bxi}{ds} = -\partial_{\bx}H_{g},
\end{equation}
with initial condition $(\bx(0),\bxi(0)) = X^{(0)}$. Then the solution $X_g(s, X^{(0)}) = (\bx(s),\bxi(s))$ defines a geodesic (or a ray) in phase space, where $\bx(s)$ is the projection onto physical space $\Omega$, $\bxi(s)$ is the cotangent vector at $\bx(s)$, and $s$ denotes the traveltime.

In the following sections, we consider the case in which there are obstacles inside the domain $\Omega$. Rays are broken and reflected at the boundary when they hit the obstacles. Then the Hamiltonian system~\eqref{eq:hamiltonian} needs to impose a jump condition at the reflection point. The jump condition for the case when there is only one obstacle strictly lying in $\Omega$ has been derived in~\cite{chung2011adaptive}.  Let $\Gamma$ be the interface where rays are reflected, for each reflected ray, there is a unique time $s^{\ast} > 0$ that $\bx(s^{\ast})\in\Gamma$ hits the interface at an incoming direction $\bxi_{\texttt{in}}:=\bxi(s^{\ast})$. The ray will be reflected to an outgoing direction $\bxi_{\texttt{out}}:=(I - 2\bn\bn^T)\bxi_{\texttt{in}}$, where $\bn$ is unit outward normal vector at $\bx(s^{\ast})$ of the obstacle. The Hamiltonian system for a broken-once ray will be
\begin{equation}
\begin{aligned}
&\frac{d\bx}{ds} = \partial_{\bxi}H_{g},\quad \frac{d\bxi}{ds} = -\partial_{\bx}H_{g},\quad 0<s\le s^{\ast}, \quad (\bx(0), \bxi(0)) = X^{(0)}, \\
&\frac{d\bx}{ds} = \partial_{\bxi}H_{g},\quad \frac{d\bxi}{ds} = -\partial_{\bx}H_{g},\quad s>s^{\ast},\quad (\bx(s^{\ast}),\bxi(s^{\ast})) = (\bx(s^{\ast}),\bxi_{\texttt{out}}).
\end{aligned}
\end{equation}
To derive Stefanov-Ulhmann identity, we need the following Jacobian matrix with respect to the initial condition
\begin{equation} 
J_g(s, X^{(0)}) := \frac{\partial X_g}{\partial X^{(0)}}(s, X^{(0)}) = \begin{pmatrix}
\frac{\partial \bx}{\partial \bx(0)} & \frac{\partial\bx}{\partial \bxi(0)}\\
\frac{ \partial \bxi}{\partial \bx(0)} & \frac{\partial\bxi}{\partial\bxi(0)}
\end{pmatrix}.
\end{equation}
Let 
\begin{equation}
M = \begin{pmatrix}
H_{\bxi,\bx} & H_{\bxi,\bxi}\\
-H_{\bx,\bx} & -H_{\bx,\bxi}
\end{pmatrix},
\end{equation}
then $J_g(s, X^{(0)})$ satisfies system
\begin{equation}\label{eq:jacobian}
\begin{aligned}
&\frac{dJ}{ds} = MJ,\quad  J(0) = I \quad \text{ for } 0 < s < s^{\ast},\\
&\frac{dJ}{ds} = MJ,\quad J(s^{\ast}) = B\quad \text{ for } s>s^{\ast}.
\end{aligned}
\end{equation}
where 
\begin{equation}\label{eq:jump-condition}
\begin{aligned}
B =  \begin{pmatrix}
J(s^{\ast})_{11} & J(s^{\ast})_{12}\\
\partial_{\bxi}\bxi_{\texttt{out}} J(s^{\ast})_{21} + \partial_{\bx} \bxi_{\texttt{out}}J(s^{\ast})_{11} &	\partial_{\bxi}\bxi_{\texttt{out}} J(s^{\ast})_{22} + \partial_{\bx}\bxi_{\texttt{out}}J(s^{\ast})_{12} 
\end{pmatrix}.
\end{aligned}
\end{equation}
Similar to~\cite{chung2011adaptive}, we consider function 
\begin{equation}
F(s) = X_{g_2}(t-s, X_{g_1}(s, X^{(0)})),
\end{equation}
where $t = t_{g_1}$ and 
\begin{equation}
\int_0^{t} F'(s) ds = X_{g_1}(t,X^{(0)}) - X_{g_2}(t, X^{(0)}).
\end{equation}
The left-hand side is 
\begin{equation}
\int_0^{t} F'(s) ds = \int_0^t J_{g_2}(t-s, X_{g_1}(s,X^{(0)})) \times (V_{g_1} - V_{g_2})(X_{g_1}(s, X^{(0)})) ds,
\end{equation}
where $V_g = \left(\partial_{\bxi}H_g, -\partial_{\bx}H_g \right)$. Linearize above integral's right-hand side at metric $g_2$, then we approximately have
\begin{equation}\label{eq:linearized-su}
\begin{aligned}
X_{g_1}(t,X^{(0)}) - &X_{g_2}(t, X^{(0)}) \simeq \\&\int_0^t J_{g_2}(t-s, X_{g_2}(s,X^{(0)})) \times \partial_g V(g_1-g_2, g_2,X_{g_2}(s, X^{(0)})) ds.
\end{aligned}
\end{equation}
For simplicity, we only consider isotropic metric in following sections. Let $X=(\bx,\bxi)$, then
\begin{equation}
g_{ij} = c^{-2}\delta_{ij},\quad \partial_g V(\lambda, g, X) = (2c\lambda \bxi, -(\lambda \nabla c + c\nabla \lambda )|\bxi|^2).
\end{equation}

\section{A hybrid method for reconstruction}\label{sec:3}
\subsection{Stabilized adaptive phase space method}\label{sec:stab-adaptive}
In~\cite{chung2011adaptive}, the authors have introduced the adaptive phase space method. The numerical method is an iterative algorithm based on linearized Stefanov-Uhlmann identity~\eqref{eq:linearized-su}, and the algorithm automatically follows layer-stripping process by choosing those rays with small mismatches on exiting phase measurements. However, using \emph{only} mismatch information could deviate from layer-stripping since small mismatches do not guarantee small errors on paths. Hence we propose a stabilized iterative method to overcome the ``false picking".

For simplicity, we first introduce the method for the medium without interior obstacle. The metric $g$ is discretized over an underlying Eulerian grid in the physical domain $\Omega$. The linearized Stefanov-Ulhmann identity~\eqref{eq:linearized-su} is discretized along the ray for each initial condition $X^{(0)}$ in phase space, the Jacobian matrix along the ray is computed by~\eqref{eq:jacobian}, the value of metric $g$ on non-grid points are linearly interpolated from neighborhood grid values. Therefore each integral equation along the ray $X_g(s,X^{(0)})$ represents a linear equation for neighboring  grid values.

Let $X_k^{(0)}, k = 1,2,\dots, m$, be initial coordinates in phase space of those measurements $X_g(t_k, X_k^{(0)})$, where $t_k$ is the traveltime of corresponding ray starts from $X_k^{(0)}$. We then iteratively construct a sequence of metric $g_n$  as follows.

First, we need to construct a good initial guess $g_0$ for the linearized problem, which is important for convergence of the metric.

For short geodesic $X_g(s, X^{(0)}) = (\bx(s), \bxi(s)), 0\le s \ll 1$,  with assumption on differentiability and local analyticity of $g$, we can easily deduce that
\begin{equation}
\begin{aligned}
\bx(s) &= \bx^{(0)} + s H_{g,\bxi} + \frac{s^2}{2} (H^{(0)}_{g,\bxi,\bx}H^{(0)}_{g,\bxi} - H^{(0)}_{g,\bxi,\bxi}H^{(0)}_{g,\bx}) + O(s^3),\\
\bxi(s) &= \bxi^{(0)} - sH^{(0)}_{g,\bx} - \frac{s^2}{2}(H^{(0)}_{g,\bx,\bx} H^{(0)}_{g,\bxi} - H^{(0)}_{g,\bx,\bxi}H^{(0)}_{g,\bx}) + O(s^3),
\end{aligned}
\end{equation}
where $H^{(0)}_g = H_g(X^{(0)})$. In case of isotropic metric, $g^{ij} = c^2(\bx)\delta_{ij}$, choose $X^{(0)} = (\bx^{(0)}, c^{-1}(\bx^{(0)})\bv)$ for a unit direction $\bv$ that is closest to tangential direction, and we have the following Talyor expansions
\begin{equation}
\begin{aligned}
\bx(s) &= \bx^{(0)} + s c \bv + \frac {s^2}{2}c (2\bv(\nabla c)^T \bv - \nabla c) + O(s^3),\\
\bxi(s) &= \bxi^{(0)} - s \frac{\nabla c}{c} - \frac{s^2}{2}((\nabla c\nabla c^T + c\nabla^2 c) c\bv - \nabla c \bv^T \frac{\nabla c}{c}) + O(s^3).
\end{aligned}
\end{equation}
By ignoring the $O(s^3)$ term of the first approximation above at a boundary point $\bx^{0}\in\partial\Omega$, we can solve $\nabla c(\bx_0)$ from the following linear equation
\begin{equation}
\bx(s) = \bx^{(0)} + s c \bv + \frac {s^2}{2}c (2\bv(\nabla c)^T \bv - \nabla c).
\end{equation}
After solving $\nabla c$ for all boundary points, we can construct a smooth initial guess $g_0^{ij} = c_0^{2}(\bx)\delta_{ij}$ by solving following minimization problem
\begin{equation}
c_0 = \argmin_{\tilde{c}} \frac{1}{2}\int_{\partial\Omega} \left(|\nabla c - \nabla \tilde{c}|^2 + |c - \tilde{c}|^2 \right)ds + \frac{\gamma}{2}\int_{\Omega} |\nabla \tilde{c}|^2 dx,
\end{equation}
where $\gamma$ is a small regularization parameter.
Define measurement mismatch $d^n_k$ of $k$-th ray as
\begin{equation}
d_k^n = X_g(t_k, X_k^{(0)}) - X_{g_n}(t_k, X_k^{(0)}).
\end{equation}
By linearized Stefanov-Ulhmann identity, we define linear operator $F_k^n$ along the $k$-th ray
\begin{equation}
\label{eq:SU}
F_k^n \tilde{g} = J_{g_n}(t,  X_k^{(0)})\int_0^t J_{g_n}^{-1}(s, X_k^{(0)})\partial_g V_{g_n}(\tilde{g}, X_{g_n}(s, X_k^{(0)})) ds.
\end{equation}
The linear operators $F_k^n$ are matrices of size $4\times N$, where $N$ is the number of unknowns in physical domain $D$, $d^n_k$ are vectors of size $4\times 1$. For each $g^n$, we define block matrix $A^n$ of size $m \times N$, with each block of size $4\times 1$ that
\begin{equation}\label{eq:linear-eq}
A^n =  \begin{bmatrix}
F^n_1 \\  F^n_2 \\ \vdots \\ F_m^n
\end{bmatrix}, \mbox{ and } A^n g \simeq b^n:=\begin{bmatrix}
d_1^n \\ d_2^n \\\vdots \\ d_m^n 
\end{bmatrix}.
\end{equation}

From a geometrical viewpoint, if the matrix $F_k^n$'s $l$-th column is nonzero, then it means the $k$-th ray passes nearby the $l$-th grid, the total number of nonzero columns in $F_k^n$ roughly represents the length of this ray. During each iteration, suppose the $k$-th ray's mismatch is negligible, then it is highly possible that the matrix $F_k^n$ is close to the correct one. In other words, the velocity field at those grid points used in the computation of the linear operator $F_k^n$ in \eqref{eq:SU} along the $k$-th ray is likely to be accurate. In order to characterize this property,  we define a fidelity function $0 \le p^n(\bx)\le 1$ for all grid points $ \bx\in \Omega$ at $n$-th iteration, which approximately represents the confidence of the current value of $g$ at $\bx$ and initially $p^0(\bx)\equiv 0$ over $\Omega$.
We also define a residual function for $F_k^n$ of matrix $A^n$, 
\begin{equation}
\texttt{res}(F_k^n) = \texttt{nnz}(F_k^n) - \sum_{i:F_k^n(:,i)\neq 0} p(\bx_i), \quad 
\end{equation}
where $\texttt{nnz}(F_k^n)$ is the number of nonzero columns in $F_k^n$ and $\bx_i, i=1, 2, \ldots, N$ are the grid points.
Geometrically, $\texttt{res}(F_k^n)$ approximately reveals effective length of the part of the $k$-th ray along which the velocity field is unknown. Or simply an indicator of the accuracy of the linearized Stefanov-Ulhmann identity \eqref{eq:linearized-su} along the $k$-th ray. At $n$-th iteration, we use $S_n$ to represent a subset of row blocks, which corresponds to all indices $k$ that $\texttt{res}(F_k^n)$ are under some effective length threshold $r_{\min}$,  
\begin{equation}
S_n = \{k\in \{1, 2, \dots, m\} |\, \texttt{res}(F_k^n) \le r_{\min}\}.
\end{equation}
We notice that the rays belonging to $S_n$ will provide more stable reconstruction than other rays do.

At $n$-th iteration, we construct a perturbation $\tilde{g}$ by minimizing functional
\begin{equation}\label{eq:regularize}
\mathcal{H}(\tilde{g}) = \frac{1}{2} \|A^n(S_n, :) \tilde{g} - b(S_n)\|^2 + \frac{\beta}{2}\|\nabla \tilde{g}\|^2.
\end{equation}
where $\beta$ is a regularization parameter. We then update $g^{n+1}$ by
\begin{equation}
g^{n+1} = g^n + \tilde{g},
\end{equation}
until the relative error of measurement is below certain tolerance level. The fidelity function $p^{n+1}$ is updated by the following steps. 
\begin{enumerate}
	\item For each $k\in S_n$, we calculate the residual error along the $k$-th ray $$e_k = \|A^n(k, :)\tilde{g} - b(k)\|.$$
	\item If $e_k$ is under some threshold $\tau$, we update the fidelity for grid points involved in the linear operator $F^n_k$ by
	\begin{equation}
	p^{n+1}(\bx_i) = \max(p^n(\bx_i), 1 - \alpha e_k),\quad\text{ when } A^n(k,i)\neq \mathbf{0}_{4\times 1}.
	\end{equation} 
	where the threshold $\tau$ is chosen to be small and $\alpha$ is a parameter to control the fidelity decay.  Here we have made assumption that if an effectively ``short'' ray has smaller mismatch with measurement, then metric $g$'s value along the ray has higher fidelity.
\end{enumerate} 

This adaptive method follows a layer-stripping process by using shorter rays near-boundary first and stripping them by updating fidelity function, which corresponds to the foliation process introduced in~\cite{uhlmann2016inverse}. The layers are implicitly charaterized by $p^n$ at each iteration, and only depend on the measurements/data. We can see from the numerical experiments in Section~\ref{sec:num} that this method not only improves stability and efficiency over the previous adaptive phase space method introduced in \cite{chung2011adaptive}, but also achieves a better accuracy on reconstructed metric.

\subsection{Interior reflection detection}\label{sec:ref-detect}
When there is a reflective obstacle lying inside the medium, and neither the obstacle nor the metric is known, then the reconstruction of both can be very challenging. In~\cite{chung2011adaptive}, the authors used the adaptive phase space method to distinguish most of the unbroken rays. These rays can help to recover the convex hull (under the same metric) of the unknown obstacle and the medium outside the convex hull. However, for concave part of the boundary of the obstacle, one also needs to use those broken rays. In the following we propose a direct imaging method to find points on the boundary of the obstacle using broken-once rays and the metric $g$ recovered from the adaptive phase method based on non-broken rays as described in the previous section.


Let's consider a simple case in this scenario. Suppose the physical domain $\Omega$ is \emph{convex} and interior reflector $D\subseteq \Omega$ have $C^2$ boundaries, which are homotopic equivalent to $S^{d}$. 
We call a point $\by\in\partial D$ a \emph{radiative} boundary point if a ray starts from $\by$ with direction along the outward normal $\bn(\by)$ of $\partial D$ only intersects with $\partial\Omega$. Denote the set of all radiative boundary points by $R$, assume $R$ is nonempty, then consider a continuous mapping $L_g:R\to \partial\Omega$ that
\begin{equation}
L_g(\by)= P_{\bx}X_g(t, (\by, c^{-1}\bn(\by))),
\end{equation}
where $t$ is the traveltime such that $X_g(t, (\by, c^{-1}\bn(\by))) \in \Gamma_{-}$ and $P_{\bx}$ is projection mapping from phase space to physical space. Let the range of $L_g$ be $T_g$, then for each $\bx^{(0)}\in T_g$, there exists a direction $\bxi^{(0)}$ and travel time $s^{\ast}(\bx^{(0)},\bxi^{(0)}) $ such that  $ \bx(s^{\ast})\in \partial D, \bxi_{\texttt{out}} = -\bxi(s^{\ast})$, which means the ray hits the obstacle in the inward normal direction. Such reflected rays will go back to its initial physical location and have exact opposite directions. Though numerically such rays are rare, but they are very stable and can be used to parametrize the obstacle implicitly, because the reflection occurs exactly at halfway of a broken-once ray, see Figure~\ref{fig:orthorays}.
\begin{figure}[!htb]
	\centering
	\includegraphics[scale=0.45]{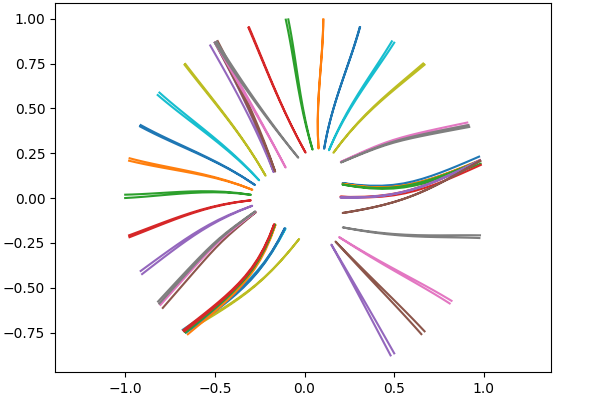}
	\caption{The broken rays hit the obstacle in (nearly) normal direction.}
	\label{fig:orthorays}
\end{figure}

We remark that once such rays exist, then an obstacle will be detected because otherwise these non-broken rays (or geodesics) are not unique along the their initial phases.

Numerically, let $X^{(0)}_k, k=1,2,\dots,m$ be initial coordinates in phase space as previous section, $t_k$ is the traveltime of corresponding ray $(\bx_k(s), \bxi_k(s))$ starts from $X^{(0)}_k=(\bx_k^{(0)}, \bxi_k^{(0)})$. We use $T$ to represent a subset of index $\{1,2,\dots, m\}$ of rays satisfying
\begin{equation}
\label{eq:broken}
T = \Big\{k\,\Big|\,\|\bx_k^{(0)} - \bx_k(t_k)\|_{L^2} + \|\bxi_k^{(0)} + \bxi_k(t_k)\|_{L^2} < \epsilon\Big\}.
\end{equation}
from the given data, i.e., the scattering relations for certain numerical tolerance $\epsilon>0$. These rays are considered to be hitting the reflector in the normal direction. And the reflection happens at middle point $s^{\ast}_k = t_k/2$. Comparing to other broken rays, these are more predictable and easier to use to locate the boundary due to the knowledge of reflection time and angle.

\subsection{Non-broken rays detection}\label{sec:non-broken-detect}
The adaptive phase space method~\cite{chung2011adaptive} has shown its potential to distinguish most of the non-broken rays during the layer-stripping process, if a broken ray or a non-broken ray is falsely predicted, then it is likely to produce an $O(1)$ mismatch in scattering relation due to the jump condition~\eqref{eq:jump-condition}. But this method would have difficulties in critical cases, e.g. distinguishing near-tangent broken and non-broken rays. 

For each physical location $\bx^{(0)}$ on the boundary of the domain, $\partial\Omega$, we define the set of directions for broken-rays (including tangential rays) by
\begin{equation}
\mathcal{B}(\bx^{(0)}) = \{ \bxi~|~ (\bx^{(0)},\bxi)\in \mathcal{S}^-, X_g(t, \bx^{(0)}, \bxi) \cap  \partial D \neq \emptyset \text{ for some } t > 0\}.
\end{equation}
 Let $\mathcal{C}(\bx^{(0)})$ be the smallest simply connected set containing $\mathcal{B}(\bx^{(0)})$, then we only have to find out the directions $\bxi\in \partial \mathcal{C}(\bx^{(0)})$, since the rays with directions outside $\mathcal{C}(\bx^{(0)})$ are all non-broken.  In order to be able to detect the set $\partial \mathcal{C}^{(0)}$ with both medium and object unknown, we need to make a few further assumptions on the metric and obstacle. For simplicity, we focus our study on the case where the medium is isotropic, i.e., $H(\bx, \bxi)=\frac{1}{2}(c^2(\bx)\|\bxi\|^2-1)$, although our results can be extended to the more general anisotropic case.
\begin{enumerate}
	\item $c(\bx)$ is a $C^3$ function and non-trapping.\label{as:1}
	\item Both boundaries $\partial \Omega$ and $\partial D$ are $C^2$. Without loss of generality, we assume the boundaries $\partial\Omega$ and $\partial D$ are represented by $G(\bx) = 0$ and $F(\bx) = 0$ respectively, where $G$ and $F$ are $C^2$ functions. \label{as:2}
	\item If a broken ray $X_g(s, \bx^{(0)}, \bxi^{(0)}) = (\bx(s), \bxi(s))$ is tangential to $\partial D$ and its exiting phase is $(\by, \bzeta) = (\bx(t), \bxi(t))$, where $t$ is traveltime, then there is a constant $\delta >0 $ that $|\bzeta \cdot \bn(\by)| > \delta$, where $\bn(\by)$ is the outward unit normal vector of $\partial\Omega$ at $\by$. Physically speaking, this means for those the rays that are tangential to the obstacle's boundary, they exit in non-tangential directions. \label{as:3}
	\item If a broken ray $X_g(s, \bx^{(0)}, \bxi^{(0)}) = (\bx(s), \bxi(s))$  is tangential to $\partial D$ at $\bp = \bx(t_{\bp})\in\partial D$, 
	\begin{equation}\label{eq:as4}
\bn_{\bp}\cdot \frac{\partial \bxi}{\partial s}\Big|_{t_{\bp}} +\left( \frac{\partial \bx}{\partial s}\nabla\frac{\nabla F}{\|\nabla F\| }\bxi\right)\Big|_{t_{\bp}}\neq 0,
	\end{equation}
where $\bn_{\bp}$ is outward unit normal at $\bp$. This term stands the interaction between the geometry of the obstacle and the medium. When this term is nonzero, reflection at the obstacle boundary would impose a significant change on an impinging ray direction compared to the change of direction due to the medium variation. In the isotropic case, this requirement is equivalent to
	\begin{equation}
	-\bn_{\bp}\cdot \frac{\nabla c(\bp)}{c(\bp)} + \bk\cdot \frac{\textrm{Hess}(F)}{\|\nabla F\|}\Big|_{\bp}\bk \neq 0,
	\end{equation}
	where $\bk = \frac{\bxi}{\|\bxi\|}$. The second term is bounded below by the minimal principal curvature $\lambda_{\min}$, therefore, if we assume that 
	\begin{equation}
	\frac{\|\nabla c(\bp)\|}{c(\bp)} < \lambda_{\min},
	\end{equation}	
	then~\eqref{eq:as4} is satisfied. Simply speaking, if the obstacle boundary's minimal principal curvature is not small, or the speed $c$ varies little, then reflection's is strong enough to be observed.
	In addition, we require $\bn_{\bp}$ not parallel to $\bxi^{(0)}$ and $\bn_{\bp}\cdot \frac{\partial\bx}{\partial\bxi^{(0)}}\Big|_{t_{\bp}} \neq\bzero$. This condition guarantees one to find a ray starting at  $\bx^{(0)}$ with a direction in a small neighborhood of $\bxi^{(0)}$ whose reflection by the obstacle can be observed in the scattering relation (see Lemma \ref{lm:4} and \ref{lm:6}). 
	\label{as:4}
\end{enumerate}
By the differentiability theorem of initial value problems, we can easily show the following lemma.
\begin{lemma}\label{LM:CONT}
	If metric $g_{ij}=c^{-2}\delta_{ij}$ satisfies that $c(\bx)$ is $C^{k}, k\ge 3$ in $\Omega$, then the Hamiltonian system's solution $X_g$ is $C^{k-1}$ before and after hitting the obstacle. 
\end{lemma}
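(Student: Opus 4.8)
The plan is to deduce everything from the classical theorem on $C^{m}$-dependence of solutions of an ODE on initial data and on time, applied separately on the two time intervals $0\le s<s^{\ast}$ and $s>s^{\ast}$ cut out by the reflection and then glued at $s^{\ast}$ by the implicit function theorem. The first ingredient is simply to record the regularity of the data: since $c\in C^{k}(\Omega)$ and $H_{g}(\bx,\bxi)=\tfrac12(c^{2}(\bx)\|\bxi\|^{2}-1)$ is a polynomial in $\bxi$ whose coefficients are built from $c^{2}$, the Hamiltonian is $C^{k}$, and hence the Hamiltonian vector field $V_{g}=(\partial_{\bxi}H_{g},-\partial_{\bx}H_{g})$ driving \eqref{eq:hamiltonian} is $C^{k-1}$ on $\Omega\times\bbR^{d}$.

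On the first leg $0\le s<s^{\ast}$ the trajectory solves $\frac{d}{ds}X_{g}=V_{g}(X_{g})$ with $X_{g}(0)=X^{(0)}$, so the classical differentiable-dependence theorem immediately gives that $(s,X^{(0)})\mapsto X_{g}(s,X^{(0)})$ is $C^{k-1}$, and bootstrapping $X_{g}'(s)=V_{g}(X_{g}(s))$ shows it is in fact $C^{k}$ in $s$ for each fixed $X^{(0)}$; this is the ``before hitting'' assertion. For the ``after hitting'' assertion I would first show that the hitting time $s^{\ast}(X^{(0)})$ is a $C^{k-1}$ function of $X^{(0)}$: writing $\partial D=\{F=0\}$, for an initial condition whose ray meets $\partial D$ transversally one has $\frac{d}{ds}F(\bx(s))=\langle\nabla F(\bx(s)),\partial_{\bxi}H_{g}(X_{g}(s))\rangle\neq 0$ at $s=s^{\ast}$, so the implicit function theorem applied to $\Phi(s,X^{(0)}):=F(\bx(s,X^{(0)}))$ produces $s^{\ast}(\cdot)\in C^{k-1}$ locally. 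Then the reflected phase point
\[
\bigl(\bx(s^{\ast}),\bxi_{\texttt{out}}\bigr)=\Bigl(\bx(s^{\ast}),\,(I-2\bn\bn^{T})\,\bxi(s^{\ast})\Bigr)
\]
(with $\bn$ the unit normal to $\partial D$ at $\bx(s^{\ast})$) is a $C^{k-1}$ function of $X^{(0)}$ — a composition of the $C^{k-1}$ flow with the $C^{k-1}$ map $s^{\ast}$ and the unit normal field — and restarting the Hamiltonian flow from this $C^{k-1}$ data and invoking the differentiable-dependence theorem once more gives that $(s,X^{(0)})\mapsto X_{g}(s,X^{(0)})$ is $C^{k-1}$ on the leg $s>s^{\ast}$. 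Differentiating this composition in $X^{(0)}$ is precisely the computation that yields the jump matrix $B$ in \eqref{eq:jump-condition}.

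The only step that needs genuine care is the reflection: one must know the ray meets $\partial D$ transversally, so that $s^{\ast}$ is well-defined and inherits the $C^{k-1}$ regularity of the flow through the implicit function theorem — this is exactly where the non-tangency built into Assumptions~\ref{as:3}--\ref{as:4} enters — and one must carry the regularity of $\partial D$ along, so that $\bn$, hence $\bxi_{\texttt{out}}$, is $C^{k-1}$ in $X^{(0)}$; for the full $C^{k-1}$ conclusion this asks that $\partial D$ be as smooth as $c$, the bare $C^{2}$ hypothesis of Assumption~\ref{as:2} serving only to make the reflection well-defined and the single reflection $C^{1}$. Everything else is a direct invocation of the classical theory of ODEs, which is why the lemma is ``easily shown''.
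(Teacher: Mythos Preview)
Your proposal is correct and follows the same route as the paper, which in fact offers no proof beyond the single sentence ``By the differentiability theorem of initial value problems, we can easily show the following lemma.'' You have simply unpacked that invocation in considerably more detail, including the implicit-function argument for $s^{\ast}$ and the observation about the regularity needed of $\partial D$ for the reflected leg --- none of which the paper spells out.
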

\begin{lemma}
	For any $\bxi\in \partial \mathcal{C}(\bx^{(0)})$, $X_g(\cdot, \bx^{(0)}, \bxi)$ is tangential to $\partial D$.
\end{lemma}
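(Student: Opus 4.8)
The plan is to establish the two facts hidden in the statement: that a boundary direction $\bxi\in\partial\mathcal{C}(\bx^{(0)})$ still belongs to $\mathcal{B}(\bx^{(0)})$, so that $X_g(\cdot,\bx^{(0)},\bxi)$ really meets $\partial D$, and that this contact cannot be transversal, hence must be tangential. Accordingly I would split the argument into a ``closedness of broken directions'' part and an ``openness of transversal contacts'' part, and then combine them.

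First I would check that $\mathcal{B}(\bx^{(0)})$ is closed in the set of admissible directions at $\bx^{(0)}$. Given $\bxi_j\in\mathcal{B}(\bx^{(0)})$ with $\bxi_j\to\bxi$, let $\bx_j(t_j)\in\partial D$ be the first contact of the $j$-th ray, which precedes any reflection so that the unbroken Hamiltonian flow applies. By Assumption~\ref{as:1} the metric is non-trapping and, since $\Omega$ is convex, the exit time is continuous at $\bxi$ (using Lemma~\ref{LM:CONT}), so the times $t_j$ are bounded; along a subsequence $t_j\to t_*$ and, $\partial D$ being compact, $\bx_j(t_j)\to\bq\in\partial D$, whence continuity of the flow in Lemma~\ref{LM:CONT} gives $\bx(t_*;\bx^{(0)},\bxi)=\bq\in\partial D$, i.e. $\bxi\in\mathcal{B}(\bx^{(0)})$. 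Because $\mathcal{B}(\bx^{(0)})$ is then closed and connected --- connectedness following from connectedness of $D$ via a continuity argument for the set of directions meeting $D$ --- passing to the smallest simply connected superset only fills in bounded complementary components, so $\partial\mathcal{C}(\bx^{(0)})\subseteq\partial\mathcal{B}(\bx^{(0)})\subseteq\mathcal{B}(\bx^{(0)})$. In particular $X_g(\cdot,\bx^{(0)},\bxi)$ meets $\partial D$.

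Next I would show that the set $\mathcal{B}^{\mathrm{tr}}(\bx^{(0)})$ of directions whose ray has a transversal contact with $\partial D$ is open. If $\bx(t_{\bp};\bx^{(0)},\bxi)=\bp\in\partial D$ with $\frac{d\bx}{ds}(t_{\bp})\cdot\bn(\bp)\neq 0$, I would apply the implicit function theorem to $\Phi(t,\bxi')=F(\bx(t;\bx^{(0)},\bxi'))$, which is $C^1$ by Lemma~\ref{LM:CONT} and Assumption~\ref{as:2}, using $\partial_t\Phi(t_{\bp},\bxi)=\frac{d\bx}{ds}(t_{\bp})\cdot\nabla F(\bp)\neq 0$: for $\bxi'$ near $\bxi$ there is a time $t(\bxi')$ near $t_{\bp}>0$ with $\bx(t(\bxi');\bx^{(0)},\bxi')\in\partial D$, still inside $\Omega$ since $\partial D$ lies in the interior of $\Omega$. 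Hence a neighborhood of $\bxi$ lies in $\mathcal{B}(\bx^{(0)})\subseteq\mathcal{C}(\bx^{(0)})$, so $\mathcal{B}^{\mathrm{tr}}(\bx^{(0)})\subseteq\mathcal{C}(\bx^{(0)})^{\circ}$. Combining the two parts: for $\bxi\in\partial\mathcal{C}(\bx^{(0)})$ we get $\bxi\in\mathcal{B}(\bx^{(0)})$, so the ray meets $\partial D$, while $\bxi\notin\mathcal{C}(\bx^{(0)})^{\circ}\supseteq\mathcal{B}^{\mathrm{tr}}(\bx^{(0)})$, so none of its contacts with $\partial D$ is transversal; therefore every contact is tangential, which is the claim.

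I expect the openness of $\mathcal{B}^{\mathrm{tr}}(\bx^{(0)})$ to be the main point to get right: one must invoke the flow regularity of Lemma~\ref{LM:CONT} carefully and make sure that the contact point produced by the implicit function theorem is a genuine pre-exit intersection with $\partial D$, not a spurious one lying beyond where the perturbed ray has already left $\Omega$; this is where the convexity of $\Omega$ and the separation of $\partial D$ from $\partial\Omega$ are used. The only other delicate point is the topological reduction $\partial\mathcal{C}\subseteq\mathcal{B}$, which rests on $\mathcal{B}(\bx^{(0)})$ being closed and connected, and connectedness deserves to be spelled out from connectedness of $D$.
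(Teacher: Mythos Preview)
Your argument is correct and matches the paper's approach: the paper also disposes of the two alternatives (transversal impact and no impact at all) by the same openness observations you use, only packaged as two direct contradictions --- an open neighborhood in $\mathcal{B}\subseteq\mathcal{C}$ forcing $\bxi\in\mathcal{C}^{\circ}$ for the first case, and an open neighborhood disjoint from $\mathcal{B}$ contradicting minimality of $\mathcal{C}$ for the second. Your treatment is more explicit (implicit function theorem, compactness), and note that the paper's direct minimality argument in the second case sidesteps your intermediate reduction $\partial\mathcal{C}\subseteq\partial\mathcal{B}$ and hence the connectedness issue you flag.
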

\begin{proof}
	We prove by contradiction. If $X_g(s, \bx^{(0)}, \bxi)$ impinges on the obstacle $D$ with non-tangential direction, then there is an open neighborhood $U$ at $\bxi$ such that $\forall \bxi_b\in U$, $X_g(s, \bx^{(0)}, \bxi_b)$ still intersects with $D$, which contradicts with the assumption of $\mathcal{C}(\bx^{(0)})$ contains $\mathcal{B}(\bx^{(0)})$. On the other hand, if $X_g(s, \bx^{(0)}, \bxi)$ is non-broken, then there is also an open neighborhood $V$ at $\bxi$ such that $\forall \bxi_n\in V$, $X_g(s, \bx^{(0)}, \bxi_n)$ are non-broken, which contradicts the smallness assumption of $\mathcal{C}(\bx^{(0)})$.
\end{proof}
\begin{lemma}\label{lm:3}
	If a broken ray $X_g(s, \bx^{(0)}, \bxi^{(0)})= (\bx(s), \bxi(s))$ is tangential to $\partial D$ and $\bxi^{(0)}\in \partial\mathcal{C}(\bx^{(0)})$. $U$ is an open neighborhood of $(\bx^{(0)}, \bxi^{(0)})$, if the phase $(\bx_n^{(0)}, \bxi_n^{(0)})\in U$  and the ray $X_g(s, \bx_n^{(0)}, \bxi_n^{(0)})= (\bx_n(s), \bxi_n(s))$ is non-broken with $\|(\bx^{(0)}, \bxi^{(0)}) - (\bx_n^{(0)}, \bxi_n^{(0)})\|\ll 1$, denote the traveltimes of $X_g(s, \bx^{(0)}, \bxi^{(0)})$ and $X_g(s, \bx_n^{(0)}, \bxi_n^{(0)})$ are $t$ and $t_n$ respectively, then 
	\begin{equation} 
	t_n - t = -\bu \cdot (\bx^{(0)}_n - \bx^{(0)}) -\bv\cdot  (\bxi^{(0)}_n - \bxi^{(0)})+ o(\|\bx^{(0)}_n - \bx^{(0)}\|) + o(\|\bxi^{(0)}_n - \bxi^{(0)}\|).
	\end{equation}
	where $\bu$ and $\bv$ are defined as
	\begin{equation}
	\begin{aligned}
	&\bu = \left(\nabla G\cdot \pdr{\bx}{s}\Big|_{t}\right)^{-1}\left(\nabla G\cdot \pdr{\bx}{\bx{(0)}}\Big|_{t} \right),\\
	&\bv = \left(\nabla G\cdot \pdr{\bx}{s}\Big|_{t}\right)^{-1}\left(\nabla G\cdot \pdr{\bx}{\bxi{(0)}}\Big|_{t} \right).
	\end{aligned}
	\end{equation}
\end{lemma}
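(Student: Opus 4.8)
The plan is to reduce the statement to the \emph{unbroken} Hamiltonian flow together with a one-equation implicit function argument, the point being that on the non-broken side of $\partial\mathcal{C}(\bx^{(0)})$ the traveltime depends smoothly on the initial phase. The first observation is that a ray that is merely \emph{tangent} to $\partial D$ is not genuinely broken: at the contact point $\bp = \bx(t_{\bp})$ one has $\bxi_{\texttt{in}}\cdot\bn_{\bp} = 0$, so the reflection operator $I - 2\bn_{\bp}\bn_{\bp}^{T}$ acts as the identity and $\bxi_{\texttt{out}} = \bxi_{\texttt{in}}$. By the flow property of~\eqref{eq:hamiltonian} the broken tangential ray $X_g(s,\bx^{(0)},\bxi^{(0)})$ therefore coincides on $[0,t]$, as a curve in phase space, with the \emph{unbroken} geodesic issued from $(\bx^{(0)},\bxi^{(0)})$, which by Lemma~\ref{LM:CONT} is $C^{k-1}$ (in particular $C^{1}$) jointly in $s$ and in the initial phase. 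Since the nearby ray $X_g(s,\bx_n^{(0)},\bxi_n^{(0)})$ is assumed non-broken, it never reflects and is literally a value of the same flow map from a perturbed initial phase. Hence $t$ and $t_n$ are both exit times of \emph{one and the same} smooth flow, and the whole question becomes the differentiability of that exit time.

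Next I would write $\bx(s;\bx^{(0)},\bxi^{(0)}) = P_{\bx}X_g(s,\bx^{(0)},\bxi^{(0)})$ and set $\Phi(s,\bx^{(0)},\bxi^{(0)}) = G\big(\bx(s;\bx^{(0)},\bxi^{(0)})\big)$, so that the traveltime is characterized by $\Phi(t,\cdot)=0$ together with the outflow sign condition. At the reference ray, $\Phi(t,\bx^{(0)},\bxi^{(0)})=0$ and
\[
\partial_s\Phi\big|_{t} \;=\; \nabla G(\by)\cdot\pdr{\bx}{s}\Big|_{t} \;=\; \nabla G(\by)\cdot\partial_{\bxi}H_g\big(X_g(t,\bx^{(0)},\bxi^{(0)})\big)\;\neq\;0,
\]
because $\nabla G(\by)$ is proportional to the outward normal $\bn(\by)$ of $\partial\Omega$, $\partial_{\bxi}H_g = c^{2}\bxi$ in the isotropic case, and Assumption~\ref{as:3} gives $|\bzeta\cdot\bn(\by)|>\delta$ for a ray tangent to $\partial D$; this is exactly transversality of the exiting geodesic to $\partial\Omega$. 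The implicit function theorem then produces a $C^{1}$ map $(\bx^{(0)},\bxi^{(0)})\mapsto t(\bx^{(0)},\bxi^{(0)})$ near the reference phase with
\[
\pdr{t}{\bx^{(0)}} = -\left(\nabla G\cdot\pdr{\bx}{s}\Big|_{t}\right)^{-1}\!\left(\nabla G\cdot\pdr{\bx}{\bx^{(0)}}\Big|_{t}\right) = -\bu,\qquad \pdr{t}{\bxi^{(0)}} = -\bv,
\]
where $\partial\bx/\partial\bx^{(0)}$ and $\partial\bx/\partial\bxi^{(0)}$ at $s=t$ are the relevant blocks of the Jacobian $J_g$ from~\eqref{eq:jacobian}. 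A first-order Taylor expansion of $t(\cdot)$ about $(\bx^{(0)},\bxi^{(0)})$ then yields the claimed identity, the $o(\cdot)$ terms being the Taylor remainder.

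The one step that needs genuine care is the passage from ``$(\bx_n^{(0)},\bxi_n^{(0)})$ is non-broken'' to ``$t_n = t(\bx_n^{(0)},\bxi_n^{(0)})$'': one must check that the first hitting time of $\partial\Omega$ by the perturbed \emph{non-broken} ray really is the value delivered by the implicit function theorem for the unbroken flow. This is a continuous-dependence bookkeeping argument: shrinking $U$, Lemma~\ref{LM:CONT} keeps the unbroken flow from $(\bx_n^{(0)},\bxi_n^{(0)})$ uniformly close to the reference ray on $[0,t+\varepsilon]$; the inflow condition at $s=0$ and transversality at $s=t$ guarantee that $\Phi$ becomes negative for small $s>0$ and returns to $0$ for the first time near $t$ with the correct outflow sign, and that no spurious earlier tangency to $\partial\Omega$ appears; and because the ray is non-broken it never meets $\partial D$ transversally, so this first hitting time is genuinely $t_n$. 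I expect this transversality/continuity step — essentially the verification that the non-broken side of $\partial\mathcal{C}(\bx^{(0)})$ is precisely where the exit time is a smooth function of the initial phase — to be the only real obstacle; the remainder is the implicit function theorem applied to $\Phi$ and Lemma~\ref{LM:CONT}.
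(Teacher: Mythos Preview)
Your proposal is correct and follows essentially the same approach as the paper: both arguments Taylor-expand the exit condition $G(\bx(\cdot))=0$ about the reference ray and use Assumption~\ref{as:3} to solve for $t_n-t$, which is exactly the implicit function theorem computation you spell out. Your version is more explicit about two points the paper leaves tacit---that the tangential ray coincides with the unbroken Hamiltonian flow (so the derivatives $\partial\bx/\partial s$, $\partial\bx/\partial\bx^{(0)}$, $\partial\bx/\partial\bxi^{(0)}$ at $s=t$ are those of a single smooth map) and that $t_n$ really is the value of the implicitly defined exit time---but the underlying mechanism is identical.
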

\begin{proof}
	The existence of such a non-broken ray in $U$ is directly from previous lemma. At the exiting locations, $G(\bx(t)) = 0$ and $G(\bx_n(t_n)) = 0$. Since $G$ is twice differentiable, we have the following expansion,
	\begin{equation}\nonumber
	\begin{aligned}
		G(\bx_n(t_n)) =&~ G(\bx(t)) + \nabla G\cdot \pdr{\bx}{s}\Big|_{t }(t_n- t) \\&+ \nabla G\cdot \pdr{\bx}{\bx^{(0)}}\Big|_{t } (\bx^{(0)}_n - \bx^{(0)}) + \nabla G\cdot \pdr{\bx}{\bxi^{(0)}}\Big|_{t} (\bxi^{(0)}_n - \bxi^{(0)}) \\
		&+ O((t_n - t)^2) + o(\|\bx^{(0)}_n- \bx^{(0)}\|) + o(\|\bxi^{(0)}_n - \bxi^{(0)}\|).
	\end{aligned}
	\end{equation}
	According to the assumption~\ref{as:3}, there exists a constant $\eta > 0$ such that $ \Big|\nabla G\cdot \pdr{\bx}{s}\Big|_{t}\Big| >  \eta$. Therefore we can find vectors $\bu$ and $\bv$ that
	\begin{equation}
	t_n - t = -\bu \cdot (\bx^{(0)}_n - \bx^{(0)}) -\bv\cdot  (\bxi^{(0)}_n - \bxi^{(0)})+ o(\bx^{(0)}_n - \bx^{(0)}) + o(\bxi^{(0)}_n - \bxi^{(0)}),
	\end{equation}
	where 
	\begin{equation}
	\begin{aligned}
	&\bu = \left(\nabla G\cdot \pdr{\bx}{s}\Big|_{t}\right)^{-1}\left(\nabla G\cdot \pdr{\bx}{\bx^{(0)}}\Big|_{t} \right),\\
	&\bv = \left(\nabla G\cdot \pdr{\bx}{s}\Big|_{t}\right)^{-1}\left(\nabla G\cdot \pdr{\bx}{\bxi^{(0)}}\Big|_{t} \right).
	\end{aligned}
	\end{equation}
\end{proof}

\begin{lemma}\label{lm:4}
	Suppose $\bxi\in \bbS^{d-1}$ is a fixed vector and $\bv\neq \bzero$ is not parallel to $\bxi$.  For any open set $B\subset \bbS^{d-1}$ 
	$\sup_{\bzeta\in B}\|\bxi - \bzeta\| <\eps \ll 1$, 
	there exists $\hat{\bzeta}\in B$ such that 
	\begin{equation}
	|\bv\cdot (\bxi - \hat{\bzeta})| = O(\|\bxi - \hat{\bzeta}\|).
	\end{equation}
\end{lemma}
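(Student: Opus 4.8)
The plan is to prove the estimate on the sphere using geodesic polar coordinates based at $\bxi$, reading off the first-order behaviour of $\bv\cdot(\bxi-\bzeta)$ as $\bzeta\to\bxi$. First I would split $\bv=a\bxi+\bv_{\perp}$ with $a=\bv\cdot\bxi$ and $\bv_{\perp}\in\bxi^{\perp}$; the hypothesis that $\bv$ is not parallel to $\bxi$ says exactly $\bv_{\perp}\neq\bzero$. Writing a unit vector near $\bxi$ as $\bzeta=\cos\theta\,\bxi+\sin\theta\,\be$ with $\be\in\bbS^{d-1}\cap\bxi^{\perp}$ and $\theta\in(0,\pi)$, a one-line computation using $1-\cos\theta=2\sin^{2}(\theta/2)$ and $\sin\theta=2\sin(\theta/2)\cos(\theta/2)$ gives $\|\bxi-\bzeta\|=2\sin(\theta/2)$ and
\begin{equation}
\frac{\bv\cdot(\bxi-\bzeta)}{\|\bxi-\bzeta\|}=a\sin(\theta/2)-\cos(\theta/2)\,(\bv_{\perp}\cdot\be),
\end{equation}
which converges to $-(\bv_{\perp}\cdot\be)$ as $\theta\to0^{+}$. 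The upper half of the conclusion, $|\bv\cdot(\bxi-\hat\bzeta)|\le\|\bv\|\,\|\bxi-\hat\bzeta\|$, is then immediate from Cauchy--Schwarz for \emph{every} $\hat\bzeta$; the substantive content (and the form in which this lemma is applied in Lemmas~\ref{lm:3} and~\ref{lm:6}, where one wants the traveltime of a nearby non-broken ray to differ from that of the tangent ray at \emph{first} order rather than to higher order) is the matching lower bound $|\bv\cdot(\bxi-\hat\bzeta)|\ge c\,\|\bxi-\hat\bzeta\|$ for a suitably chosen $\hat\bzeta\in B$, and that is what I would establish.

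By the displayed identity this reduces to producing $\hat\bzeta=\cos\hat\theta\,\bxi+\sin\hat\theta\,\hat\be\in B$ with $\bv_{\perp}\cdot\hat\be\neq0$ and with $\hat\theta$ small enough --- in terms of $|a|$ and $\delta:=|\bv_{\perp}\cdot\hat\be|$ --- that $\cos(\hat\theta/2)\ge\tfrac12$ and $2|a|\sin(\hat\theta/2)\le\delta\cos(\hat\theta/2)$, since then $|\bv\cdot(\bxi-\hat\bzeta)|\ge\tfrac12\delta\cos(\hat\theta/2)\,\|\bxi-\hat\bzeta\|\ge\tfrac14\delta\,\|\bxi-\hat\bzeta\|$. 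For the direction: since $B$ is open and nonempty, the set of directions $\be$ realized by its points is a nonempty relatively open subset of $\bbS^{d-1}\cap\bxi^{\perp}$; when $d=2$ this is a point $\pm\be_{0}$ with $\bv_{\perp}\cdot(\pm\be_{0})\neq0$ automatically, and when $d\ge3$ it cannot be contained in the proper great subsphere $\{\be:\bv_{\perp}\cdot\be=0\}$, so some realized direction $\hat\be$ has $\delta=|\bv_{\perp}\cdot\hat\be|>0$.

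The main obstacle is the last step: getting a point of $B$ that points (nearly) along $\hat\be$ \emph{and} sits at an angle $\hat\theta$ small enough for the inequalities above. For a genuinely arbitrary open $B$ constrained only by $\sup_{\bzeta\in B}\|\bxi-\bzeta\|<\eps$ this can fail --- a thin sliver of $B$ could approach $\bxi$ tangent to the hyperplane $\{\bw:\bw\cdot\bv_{\perp}=0\}$, so that every point of $B$ near $\bxi$ points almost orthogonally to $\bv_{\perp}$ and the first-order term collapses to $o(\|\bxi-\hat\bzeta\|)$. The lemma is usable only because in context $B$ is the set of non-broken directions at $\bx^{(0)}$ and $\bxi\in\partial\mathcal{C}(\bx^{(0)})$ is a boundary point between broken and non-broken directions, so $B$ is a full one-sided neighbourhood of $\bxi$ along the hypersurface $\partial\mathcal{C}(\bx^{(0)})$; I would make this precise using Lemma~\ref{LM:CONT} for the $C^{1}$ regularity of $\partial\mathcal{C}(\bx^{(0)})$ and the non-degeneracy conditions in assumption~\ref{as:4} ($\bn_{\bp}$ not parallel to $\bxi^{(0)}$ and $\left.\bn_{\bp}\cdot\partial_{\bxi^{(0)}}\bx\right|_{t_{\bp}}\neq\bzero$), which are precisely what makes non-broken directions accumulate at $\bxi$ from that side. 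With $B$ known to have this shape it contains points pointing near $\hat\be$ at every small angle, and the polar estimate of the first step then closes the argument with $c=\delta/4$.
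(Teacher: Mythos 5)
Your argument is essentially the paper's own proof: writing $\bv=a\bxi+\bv_{\perp}$ and working in polar coordinates around $\bxi$ is the same decomposition the paper uses (there $\bv$ is split into $r\bxi$ and $\bv-r\bxi$, with $|(\bxi-\hat\bzeta)\cdot\bxi|=O(t^2)$ playing the role of your $a\sin(\hat\theta/2)$ term), and the choice of $\hat\bzeta\in B$ off the hyperplane orthogonal to $\bv_{\perp}$ is exactly the paper's use of openness of $B$, with the big-$O$ in the statement indeed meant as the first-order lower bound you identify. The uniformity issue you flag (the constant degenerating for a thin sliver $B$ as $\eps\to0$) is a fair observation, but the paper's proof does not address it either --- its $\hat\gamma$ likewise depends on $B$ and $\hat\bzeta$ --- so your proposal matches, and slightly sharpens, the published argument rather than diverging from it.
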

\begin{proof}
	Without loss of generality, we assume $\|\bv\| = 1$. Let $r=\bxi\cdot\bv, |r|<1$. Since $B$ is an open set in $\bbS^{d-1}$, there exists $\hat{\bzeta} \in B$, $\hat{\bzeta} \neq \bxi$ such that $|(\bxi - \hat{\bzeta})\cdot (r\bxi - \bv)| \ge \hat{\gamma}t\sqrt{1-r^2}$ for some $\hat{\gamma}>0$, where $t=\|\bxi-\hat{\bzeta}\|<\epsilon$. Since $|(\bxi - \hat{\bzeta})\cdot \bxi|=O(t^2)$, we have $|\bv\cdot (\bxi - \hat{\bzeta})| = O(\|\bxi - \hat{\bzeta}\|)$.
	
\end{proof}

\begin{lemma}\label{lm:5}
	For any initial phase $X^{(0)} = (\bx^{(0)}, \bxi^{(0)})$, suppose the ray $$X_g(s, \bx^{(0)}, \bxi^{(0)}) = (\bx(s ), \bxi(s)),\quad \bx(0)=\bx^{(0)},\quad \bxi(0)=\bxi^{(0)}$$ is non-broken on $s\in (0, T)$. If vector $\bn$ satisfies
	\begin{equation}
	\frac{\partial \bx(t)}{\partial \bxi{(0)}} \bn = \bzero,\quad 	\frac{\partial \bxi(t)}{\partial \bxi{(0)}} \bn = \bzero, \quad \forall t\in(0,T)
	\end{equation}
	then $\bn = \bzero$.
\end{lemma}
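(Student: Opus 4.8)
The plan is to read the pair $\big(\tfrac{\partial\bx(t)}{\partial\bxi^{(0)}}\,\bn,\ \tfrac{\partial\bxi(t)}{\partial\bxi^{(0)}}\,\bn\big)$ as a single solution of the variational (Jacobi) equation along the ray, and then invoke uniqueness for linear ODEs. First I would set $Y(s) := J_g(s,X^{(0)})\begin{pmatrix}\bzero\\\bn\end{pmatrix}$, i.e.\ the action of the Jacobian matrix $J_g=\partial X_g/\partial X^{(0)}$ on the vector whose physical block is $\bzero$ and whose momentum block is $\bn$. By the block structure of $J_g$ and the matrix $M$ written just before \eqref{eq:jacobian}, this is precisely $Y(s)=\big(\tfrac{\partial\bx(s)}{\partial\bxi^{(0)}}\bn,\ \tfrac{\partial\bxi(s)}{\partial\bxi^{(0)}}\bn\big)^{T}$. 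Since the ray is non-broken on $(0,T)$, no jump condition is applied and \eqref{eq:jacobian} gives $J_g'=M J_g$ with $J_g(0)=I$; multiplying on the right by the constant vector $\begin{pmatrix}\bzero\\\bn\end{pmatrix}$ yields $Y'(s)=M(s)Y(s)$ on $(0,T)$ with $Y(0)=\begin{pmatrix}\bzero\\\bn\end{pmatrix}$ (because $\bx(0)=\bx^{(0)}$ is independent of $\bxi^{(0)}$ while $\bxi(0)=\bxi^{(0)}$). Here $M(s)=M(X_g(s))$ is continuous on $[0,T)$: Assumption~\ref{as:1} makes the Hessian entries of $H_g$ that constitute $M$ of class $C^1$, and by Lemma~\ref{LM:CONT} the ray $X_g(\cdot)$ is continuous up to $s=0$.

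Next I would use the hypothesis $Y(t)=\bzero$ for all $t\in(0,T)$. Picking any $t_0\in(0,T)$, the map $Y$ solves the linear homogeneous system $Y'=MY$ with initial value $Y(t_0)=\bzero$, so by the uniqueness theorem for linear ODEs $Y\equiv\bzero$ on $[0,T)$; running this back to the left endpoint gives $Y(0)=\begin{pmatrix}\bzero\\\bn\end{pmatrix}=\bzero$, hence $\bn=\bzero$. Equivalently, one may observe that $J_g(t)$ is a fundamental matrix solution and therefore invertible for every $t$ (indeed the Hamiltonian flow is symplectic, so $\det J_g\equiv1$), and an invertible matrix cannot send the nonzero vector $\begin{pmatrix}\bzero\\\bn\end{pmatrix}$ to $\bzero$; this again forces $\bn=\bzero$.

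I do not expect a genuinely hard step here. The only point requiring a little care is the regularity of $M(s)$ up to $s=0$, so that the uniqueness/invertibility statement can be propagated to the left endpoint and $Y(0)$ can be identified with $\begin{pmatrix}\bzero\\\bn\end{pmatrix}$; this is exactly what Assumption~\ref{as:1} and Lemma~\ref{LM:CONT} provide. Beyond that, the argument is just the standard identification of the Jacobian of the Hamiltonian flow with the state-transition matrix of the variational equation, together with its invertibility.
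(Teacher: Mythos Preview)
Your proposal is correct, and in fact your ``equivalently'' clause is exactly the paper's proof: the paper simply notes $\det J_g\equiv 1$ and concludes that $J_g\begin{pmatrix}\bzero\\\bn\end{pmatrix}=\bzero$ forces $\bn=\bzero$. Your primary route via uniqueness for the variational ODE $Y'=MY$ is a valid and slightly more self-contained alternative (it does not require Liouville's theorem or the symplectic property), but it is longer than necessary here; the paper skips straight to invertibility of $J_g$ at a single time, which already suffices since the hypothesis gives $Y(t)=\bzero$ for every $t\in(0,T)$.
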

\begin{proof}
	Since the determinant of the Jacobian matrix $J = \begin{pmatrix}
	\frac{\partial \bx}{\partial \bx{(0)}} & \frac{\partial \bx}{\partial \bxi{(0)}}\\
	\frac{\partial \bxi}{\partial \bx{(0)}} & \frac{\partial \bxi}{\partial \bxi{(0)}} 
	\end{pmatrix}$ is 1, if the vector $\bn\neq \bzero$, then
	\begin{equation}
	J\begin{pmatrix}
	\bzero\\\bn
	\end{pmatrix} = \bzero
	\end{equation}
	which contradicts to the non-degeneracy of $J$.
\end{proof}
\begin{lemma}\label{lm:6}
	From a fixed point $\bx^{(0)}\in\partial\Omega$, we denote the ray with initial direction $\bzeta$ as  $X_g(s, \bx^{(0)}, \bzeta) = (\bx(s, \bzeta), \bxi(s, \bzeta))$ and let $T(\bzeta)$ be the traveltime.
 Suppose $\bxi^{(0)}\in \partial \mathcal{C}(\bx^{(0)})$, 
	then either $\partial_{\bzeta} T$ or $\partial_{\bzeta}\bx$ or $\partial_{\bzeta}\bxi$ is discontinuous at $\bxi^{(0)}$.
\end{lemma}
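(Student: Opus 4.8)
I would argue by contradiction: assume that $\partial_{\bzeta}T$, $\partial_{\bzeta}\bx$ and $\partial_{\bzeta}\bxi$ are all continuous at $\bxi^{(0)}$ (hence bounded near $\bxi^{(0)}$), and produce a sequence $\bzeta\to\bxi^{(0)}$ along which one of them blows up. The starting point is the earlier tangency lemma: since $\bxi^{(0)}\in\partial\mathcal{C}(\bx^{(0)})$, the ray $\gamma_0:=X_g(\cdot,\bx^{(0)},\bxi^{(0)})$ touches $\partial D$ only tangentially; let $\bp=\bx(t_{\bp},\bxi^{(0)})\in\partial D$ be the first such grazing point ($t_{\bp}>0$ as $\bx^{(0)}\in\partial\Omega$ is off $\partial D$), which for not-too-concave $D$ I may take to be the only one. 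By assumption \ref{as:3} the ray $\gamma_0$ leaves $\partial\Omega$ transversally, and by \ref{as:1} at a finite time $T_0:=T(\bxi^{(0)})$.

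\textbf{The grazing function and the two sides.} Next I would set $h_{\bzeta}(s):=F(\bx(s,\bzeta))$ and $m(\bzeta):=\min_{s\approx t_{\bp}}h_{\bzeta}(s)$. Grazing from the exterior gives $h_{\bxi^{(0)}}(t_{\bp})=h_{\bxi^{(0)}}'(t_{\bp})=0$ and $h_{\bxi^{(0)}}\geq0$ near $t_{\bp}$, while assumption \eqref{eq:as4} is, up to a positive factor, exactly the statement $h_{\bxi^{(0)}}''(t_{\bp})\neq0$; hence $h_{\bxi^{(0)}}''(t_{\bp})>0$, the contact is a nondegenerate minimum, and by a Morse lemma with parameters $m$ is $C^2$ near $\bxi^{(0)}$ with $m(\bxi^{(0)})=0$. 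The envelope identity gives $\nabla_{\bzeta}m(\bxi^{(0)})=\nabla F(\bp)\cdot\partial_{\bzeta}\bx(t_{\bp},\bxi^{(0)})\neq\bzero$, which is the remaining hypothesis of \ref{as:4} (recall $\nabla F(\bp)\parallel\bn_{\bp}$). So $\{m=0\}$ is a smooth hypersurface through $\bxi^{(0)}$ splitting a neighbourhood into $N^{+}=\{m>0\}$ and $N^{-}=\{m<0\}$, both nonempty since $\bxi^{(0)}\in\partial\mathcal{C}(\bx^{(0)})$. On $N^{+}$ (near $\bxi^{(0)}$) the ray clears $\partial D$ and is non-broken, with $C^{k-1}$ exit map up to $\bxi^{(0)}$ by Lemma \ref{LM:CONT} (traveltime derivative as in Lemma \ref{lm:3}); on $N^{-}$ the ray would penetrate $D$ near $\bp$ and is therefore reflected once near $\bp$.

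\textbf{The square-root blow-up on $N^{-}$.} From $h_{\bzeta}(s)=\tfrac12 h_{\bzeta}''(s_{\min})(s-s_{\min}(\bzeta))^{2}+m(\bzeta)+\cdots$ with $h_{\bzeta}''>0$, the first hit time is $s^{*}(\bzeta)=s_{\min}(\bzeta)-\sqrt{-2m(\bzeta)/h_{\bzeta}''(s_{\min})}+\cdots$, so the hit time, the hit point $\bq(\bzeta)=\bx(s^{*},\bzeta)$, the incoming direction $\bxi_{\texttt{in}}=\bxi(s^{*},\bzeta)$ and the grazing angle $\epsilon'(\bzeta):=-\langle\bn(\bq),\bxi_{\texttt{in}}\rangle\sim c_{1}\sqrt{-m(\bzeta)}$ (with $c_{1}>0$) all carry a $\sqrt{-m(\bzeta)}$ term. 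Since $\bxi_{\texttt{out}}=\bxi_{\texttt{in}}+2\epsilon'\bn(\bq)$ and the reflected state $(\bq,\bxi_{\texttt{out}})\to(\bp,\dot\gamma_0(t_{\bp}))$, the post-reflection trajectory converges to $\gamma_0|_{[t_{\bp},T_0]}$, so the exit map stays continuous at $\bxi^{(0)}$ from $N^{-}$ too (the values agree across $\{m=0\}$, as the statement permits). But on differentiating along a ray $\bzeta(\tau)=\bxi^{(0)}+\tau e$ into $N^{-}$ chosen so that $-m(\bzeta(\tau))\sim\tau$ (possible since $\nabla_{\bzeta}m(\bxi^{(0)})\neq\bzero$; cf. Lemma \ref{lm:4}), the singular part of the reflected-state derivative is $\propto\tfrac{d\epsilon'}{d\tau}\bigl(\lambda\,V_g(\bp,\bxi(t_{\bp}))+(\bzero,2\bn_{\bp})^{\top}\bigr)$ for a finite constant $\lambda$, where $\tfrac{d\epsilon'}{d\tau}$ is of order $\tau^{-1/2}$. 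Its component transverse to the Hamiltonian field $V_g$ is nonzero, because $(\bzero,\bn_{\bp})^{\top}$ is not a multiple of $V_g(\bp,\bxi(t_{\bp}))$ (the latter has nonzero position component $\partial_{\bxi}H=\dot\gamma_0(t_{\bp})$, the former has zero position component); propagating it to $\partial\Omega$ by the post-reflection Jacobian $J_g(T_0-t_{\bp},(\bp,\bxi(t_{\bp})))$, invertible since $\det J_g\equiv1$ (cf. Lemma \ref{lm:5}), it remains transverse and nonzero. A transverse perturbation of the exit state changes the exit position or direction (a purely $V_g$-directed one is only a reparametrization), so $\partial_{\bzeta}\bx$ or $\partial_{\bzeta}\bxi$ is unbounded as $\bzeta\to\bxi^{(0)}$ inside $N^{-}$, contradicting the assumed continuity.

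\textbf{Main obstacle.} The delicate step is exactly this $\sqrt{-m}$ analysis on $N^{-}$: showing that $s^{*}(\bzeta)$, hence $\epsilon'(\bzeta)$ and the whole reflected leg, depend on $\bzeta$ through $\sqrt{-m(\bzeta)}$ with a coefficient that survives, modulo reparametrization, all the way to the exit data. This is where the full content of \ref{as:4} enters — the nonvanishing of $h''$ from \eqref{eq:as4} to force a genuine double root rather than a flat contact, $\nabla_{\bzeta}m\neq\bzero$ and $\bn_{\bp}\nparallel\bxi^{(0)}$ for the transversality (via Lemma \ref{lm:4}), and $\det J_g\equiv1$ (Lemma \ref{lm:5}) for the non-degeneracy of the propagation. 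A secondary bookkeeping point is verifying that $N^{+}$ genuinely consists of globally non-broken rays near $\bxi^{(0)}$ (uniform clearance of $\gamma_0$ from $\partial D$ off $\bp$, plus non-trapping), and — if one does not assume a single grazing — that later grazing reflections of the once-reflected ray only add further $\sqrt{\cdot}$-contributions without cancelling the leading one.
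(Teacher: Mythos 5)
Your argument is sound and rests on the same underlying mechanism as the paper's proof -- the square-root degeneracy of a grazing reflection near the tangential direction, with \eqref{eq:as4} guaranteeing a genuine second-order contact and $\bn_{\bp}\cdot\partial\bx/\partial\bxi^{(0)}\neq\bzero$ giving transversality in the initial-direction variable -- but you package it differently. The paper works with two-sided finite differences: Lemma~\ref{lm:3} gives $O(\epsilon)$ variation of the exit data on the non-broken side, a Taylor expansion of $F$ along the ray (your $h_{\bzeta}$, done without the Morse-with-parameters normal form) gives $|t_{\bq}-t_{\bp}|=O(\sqrt{\epsilon})$ and a reflected-direction kick $\propto\bn_{\bp}\sqrt{\epsilon}$ on the broken side, and then a second application of Lemma~\ref{lm:3} to the post-reflection leg produces $O(\sqrt{\epsilon})$ mismatches in $T$, $\bx$ or $\bxi$ whose coefficients are handled by a three-case analysis closed by Lemma~\ref{lm:5}. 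You instead differentiate along a path into the broken side, exhibit the singular part of the reflected state as $\tau^{-1/2}\bigl(\lambda V_g+\mu(\bzero,2\bn_{\bp})^{\top}\bigr)$ with $\mu\neq0$, and replace the case analysis by the observation that $(\bzero,\bn_{\bp})^{\top}$ is transverse to $V_g$ and that this transversality survives propagation by the invertible Jacobian; this is cleaner, avoids the case split, and actually yields the slightly stronger conclusion that the jump must appear in $\partial_{\bzeta}\bx$ or $\partial_{\bzeta}\bxi$ (the paper only concludes ``one of the three''). Two small points to tighten: the step ``it remains transverse'' needs the standard identity $J_g(t)V_g(X^{(0)})=V_g(X_g(t,X^{(0)}))$ (invertibility of $J_g$ alone does not preserve transversality to the field), and with $c\in C^3$, $F\in C^2$ the clearance function $m$ is only $C^1$, which is all you need but less than the $C^2$ you claim; the single-grazing and uniform-clearance bookkeeping you flag is likewise assumed implicitly in the paper's proof, so it is not a point of divergence.
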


\begin{proof}	
	Consider a small open neighborhood $V\subset \mathcal{S}^-$ of $\bxi^{(0)}$, then the following sets 
	\begin{equation}
	\begin{aligned}
		&N = \{ \bzeta \in V~|~ X_g(\cdot, \bx^{(0)},\bzeta) \text{ is nonbroken}\},\\
		&B = \{ \bzeta\in V ~|~ X_g(\cdot, \bx^{(0)}, \bzeta) \text{ is broken}\}.
	\end{aligned}
	\end{equation}
	are both nonempty sets.
	 Then by Lemma~\ref{lm:3}, for any $\epsilon > 0$, we can select $\bxi^{(0)}_n\in N$ that $\| \bxi^{(0)}_n - \bxi^{(0)}\| < \epsilon$ and
	\begin{equation}
	\begin{aligned}
	|T(\bxi^{(0)}) - T(\bxi^{(0)}_n)| = O(\|\bxi^{(0)} - \bxi^{(0)}_n\|).
	\end{aligned}
	\end{equation}
	and the differences between the exiting locations and phases are also of order $O(\|\bxi^{(0)} - \bxi^{(0)}_n\|)$.
	\begin{figure}[!htb]
	\begin{center}
		\includegraphics[scale=0.20]{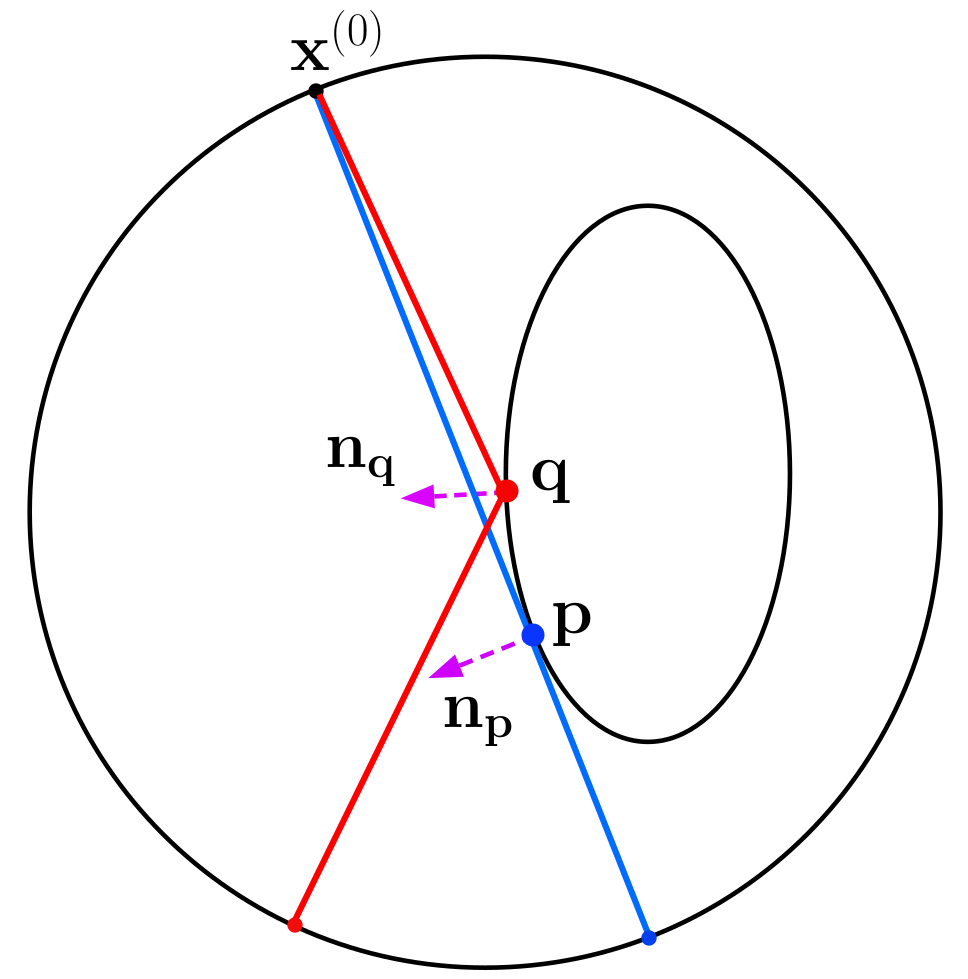}
		\caption{The illustration of the tangential ray and broken ray, both rays start from the same physical locations but different directions. The tangential ray intersects with the obstacle at $\bp$ and the other broken ray intersects with the obstacle at $\bq$.}
				\label{fig:example}
	\end{center}
	\end{figure}
	
	On the other hand, suppose the broken ray $X_g(s, \bx^{(0)}, \bxi^{(0)})$ is tangential to $\partial D$ at point $\bp = \bx(t_{\bp}, \bxi^{(0)})$, let
	$$\bz = \nabla F(\bp)\cdot \frac{\partial \bx}{\partial \bzeta}\Big|_{t_{\bp},\bxi^{(0)}},$$
	 then by the assumption~\ref{as:4}, $\bz \neq \bzero$ and not parallel to $\bxi^{(0)}$, using the Lemma~\ref{lm:4}, For the same $\epsilon$, we can select $\bxi_b^{(0)}\neq \bxi^{(0)}$ with $\bxi_b^{(0)}\in B$,  $\|\bxi_b^{(0)} - \bxi^{(0)}\|<\epsilon$ and 
	 \begin{equation}\label{eq:nonzero}
	 \bz \cdot (\bxi_b^{(0)} - \bxi^{(0)}) = O(\|\bxi_b^{(0)} - \bxi^{(0)}\|).
	 \end{equation}
	Then the ray with initial phase $X_b^{(0)} = (\bx^{(0)}, \bxi^{(0)}_b)$ is broken. 
	Note that the tangent ray $X_g(\cdot, \bx^{(0)}, \bxi^{(0)})$ satisfies 
	\begin{equation}\label{eq:tangent}
	\begin{aligned}
		\nabla F(\bp)\cdot \frac{\partial \bx}{\partial s}\Big|_{t_{\bp}, \bxi^{(0)}} = 0,\\
	F(\bp) = 0.
	\end{aligned}
	\end{equation}
	Assume the broken ray $X_g(s, \bx^{(0)}, \bxi^{(0)}_b)$ intersects $\partial D$ at $\bq = \bx(t_{\bq}, \bxi^{(0)}_b)$ as illustrated in Figure~\ref{fig:example}, then take Taylor expansion at $(t_{\bp}, \bxi^{(0)})$,
	\begin{equation}
	\begin{aligned}
	0 = F( \bx(t_{\bq}, \bxi^{(0)}_b)) =& F(\bx(t_{\bp}, \bxi^{(0)})) \\&+ \nabla F(\bp)\cdot \frac{\partial \bx}{\partial s}\Big|_{(t_{\bp},\bxi^{(0)})}(t_{\bq} - t_{\bp})+ \nabla F(\bp)\cdot \frac{\partial \bx}{\partial \bzeta}\Big|_{(t_{\bp},\bxi^{(0)})}(\bxi^{(0)}_{b} - \bxi^{(0)}) \\
	& + o(\|\bxi^{(0)}_b - \bxi^{(0)}\|) + O((t_{\bq} - t_{\bp})^2).
	\end{aligned}
	\end{equation}
	The first two terms on the right hand side are zero according~\eqref{eq:tangent}. By using~\eqref{eq:nonzero} $$\nabla F(\bp)\cdot \frac{\partial \bx}{\partial \bzeta}\Big|_{(t_{\bp},\bxi^{(0)})}(\bxi^{(0)}_{b} - \bxi^{(0)}) = \bz\cdot (\bxi^{(0)}_{b} - \bxi^{(0)}) = O(\|\bxi^{(0)}_{b} - \bxi^{(0)}\|),$$ we then conclude  
	\begin{equation}
	|t_{\bp} - t_{\bq}|= O\left(\sqrt{\|(\bxi^{(0)}_{b} - \bxi^{(0)}) \|}\right).
	\end{equation}
	After reflection at time $t_{\bq}$, the direction of the broken ray $X_g(s, \bx^{(0)}, \bxi^{(0)}_b)$ has been reflected to $$\bxi_{b,\textrm{out}} = (I - 2\bn_{\bq}\otimes \bn_{\bq})\bxi(t_{\bq}^{-}, \bxi^{(0)}_b),$$
	where $\bn_{\bq} = \frac{\nabla F(\bq)}{\|\nabla F(\bq)\|}$ is the outward unit normal vector at $\bq$. Then by taking Taylor expansion at $(t_{\bp}, \bxi^{(0)})$, the difference between the directions of the two rays $X_g(t_{\bq}^+,\bx^{(0)}, \bxi^{(0)})$ and $X_g(t_{\bq}^+,\bx^{(0)}, \bxi^{(0)}_b)$ is
	\begin{equation}
	\begin{aligned}
		\bxi(t_{\bq}, \bxi^{(0)}) - \bxi_{b,\textrm{out}}&= 2 \bn_{\bp} \left(\bn_{\bp}\cdot \frac{\partial \bxi}{\partial s}\Big|_{t_{\bp},\bxi^{(0)}} +\left( \frac{\partial \bx}{\partial s}\nabla\frac{\nabla F}{\|\nabla F\| }\bxi\right)\Big|_{t_{\bp},\bxi^{(0)}}\right) (t_{\bq} - t_{\bp}) \\&\quad + O((t_{\bq} - t_{\bp})^2).
	\end{aligned}
	\end{equation}
	where $\bn_{\bp} = \frac{\nabla F(\bp)}{\|\nabla F(\bp)\|}$ is the outward normal vector at $\bp$. According to assumption~\ref{as:4}, the first term on right hand side does not vanish, therefore
	\begin{equation}\label{eq:k1}
	\bxi(t_{\bq}, \bxi^{(0)}) - \bxi_{b,\textrm{out}} = \bn_{\bp} \cdot O(t_{\bq} - t_{\bp}) = \bn_{p}\cdot  O\left(\sqrt{\|(\bxi^{(0)}_{b} - \bxi^{(0)}) \|}\right).
	\end{equation}
	Regarding the phases at time $t_{\bq}^{+}$ as initial phases, then we can define the rest of the tangential ray as 
	\begin{equation}
	X_g(s, \bx(t_{\bq}, \bxi^{(0)}), \bxi(t_{\bq}, \bxi^{(0)})) = (\by(s), \btheta(s)), \quad \by(0)=\bx(t_{\bq}, \bxi^{(0)}),\quad \btheta(0)=\bxi(t_{\bq}, \bxi^{(0)}),
	\end{equation}
	and denote the traveltime of this partial ray as $t_{\by}$. For the other ray, we define the rest of the broken ray as
	\begin{equation}
	X_g(s, \bq, \bxi_{b,\textrm{out}}) = (\bw(s), \bEta(s)), \quad \bw(0) = \bq,\quad \bEta(0) = \bxi_{b, \textrm{out}},
	\end{equation}
	and similarly, the traveltime is denoted as $t_{\bw}$. 
	Then use the Lemma~\ref{lm:3} and~\eqref{eq:k1} 
	\begin{equation}
	\begin{aligned}
	T(\bxi^{(0)}) - T(\bxi^{(0)}_b) &= (t_{\bq} + t_{\by}) - (t_{\bq} + t_{\bw})\\&= -\bu \cdot (\by(0) - \bw(0)) -\bv \cdot (\btheta(0) - \bEta(0)) \\&\quad+ o(\|\by(0) - \bw(0)\|)+ o(\|\btheta(0) - \bEta(0)\|) \\
	&= -(\bv\cdot \bn_{p}) \cdot O\left(\sqrt{\|(\bxi^{(0)}_{b} - \bxi^{(0)}) \|}\right) + O(\|(\bxi^{(0)}_{b} - \bxi^{(0)}) \|),
	\end{aligned}
	\end{equation}
	where $\bu$ and $\bv$ are defined as following,
	\begin{equation}
	\begin{aligned}
	\bu = \left(\nabla G\cdot \frac{\partial \by}{\partial s}\Big|_{t_{\by}}\right)^{-1} \left(\nabla G\cdot \frac{\partial {\by}}{\partial \by(0)}\Big|_{t_{\by}}\right),\\
	\bv = \left(\nabla G\cdot \frac{\partial \by}{\partial s}\Big|_{t_{\by}}\right)^{-1} \left(\nabla G\cdot \frac{\partial {\by}}{\partial \btheta(0)}\Big|_{t_{\by}}\right).
	\end{aligned}
	\end{equation}
	We consider following three cases,
	\begin{enumerate}
		\item If $\bv\cdot \bn_p\neq 0$, then $\partial_{\zeta}T$ is discontinuous at $\bxi^{(0)}$. Otherwise, $T(\bxi^{(0)}) - T(\bxi^{(0)}_b) =t_{\by} - t_{\bw}= O(\|(\bxi^{(0)}_{b} - \bxi^{(0)}) \|)$ and we consider the next case.
		\item Use~\eqref{eq:k1} and $\|\bx(t_{\bq}, \bxi^{(0)}) - \bq\|=O(\|(\bxi^{(0)}_{b} - \bxi^{(0)}) \|)$, the difference between exiting physical locations is
		\begin{equation}
		\begin{aligned}
		\by(t_{\by}) - \bw(t_{\bw}) &= \frac{\partial \by}{\partial s}\Big|_{t_{\by}} (t_{\by} - t_{\bw}) \\&\quad+\frac{\partial \by}{\partial \by(0)}\Big|_{t_{\by}}(\bx(t_{\bq}, \bxi^{(0)}) - \bq) +\frac{\partial \by}{\partial \btheta(0)}\Big|_{t_{\by}}(\bxi(t_{\bq}, \bxi^{(0)}) - \bxi_{b,\textrm{out}})\\&\quad+O ((t_{\by} - t_{\bw})^2)+O(\|\bx(t_{\bq}, \bxi^{(0)})-\bq\|^2) + O(\|(\bxi(t_{\bq}, \bxi^{(0)}) - \bxi_{b,\textrm{out}}\|^2)\\
		&= \left( \frac{\partial \by}{\partial \btheta(0)}\Big|_{t_{\by}}\cdot\bn_p \right)\cdot  O\left(\sqrt{\|(\bxi^{(0)}_{b} - \bxi^{(0)}) \|}\right) + O(\|(\bxi^{(0)}_{b} - \bxi^{(0)}) \|).
		\end{aligned}
		\end{equation}
		If $\frac{\partial \by}{\partial \btheta(0)}\Big|_{t_{\by}}\cdot\bn_p $ is nonzero vector, then the derivative $\partial_{\bzeta}\bx$ will suffer from a discontinuity at $\by(t_{\by})$, otherwise if $\frac{\partial \by}{\partial \btheta(0)}\Big|_{t_{\by}}\cdot\bn_p = \bzero$, we consider the next case.
		\item Similarly, the difference between exiting directions is
		\begin{equation}
		\begin{aligned}
		\btheta(t_{\by}) - \bEta(t_{\bw}) &= \frac{\partial \btheta}{\partial s}\Big|_{t_{\by}}(t_{\by} - t_{\bw})\\&\quad +\frac{\partial \btheta}{\partial \by(0)}\Big|_{t_{\by}}(\bx(t_{\bq}, \bxi^{(0)}) - \bq) +  \frac{\partial \btheta}{\partial\btheta(0)}\Big|_{t_{\by}}(\bxi(t_{\bq}, \bxi^{(0)}) - \bxi_{b,\textrm{out}})\\&\quad + O ((t_{\by} - t_{\bw})^2)+O(\|\bx(t_{\bq}, \bxi^{(0)})-\bq\|^2) + O(\|(\bxi(t_{\bq}, \bxi^{(0)}) - \bxi_{b,\textrm{out}}\|^2)\\
		&= \left( \frac{\partial \btheta}{\partial \btheta(0)}\Big|_{t_{\by}}\cdot\bn_p \right)\cdot  O\left(\sqrt{\|(\bxi^{(0)}_{b} - \bxi^{(0)}) \|}\right) + O(\|(\bxi^{(0)}_{b} - \bxi^{(0)}) \|).
		\end{aligned}
		\end{equation}
		If $ \frac{\partial \by}{\partial \btheta(0)}\Big|_{t_{\by}}\cdot\bn_p$ is nonzero vector, then the derivative $\partial_{\bzeta}\bxi$ will suffer from a discontinuity at $\btheta(t_{\by})$. Otherwise we will have following equations
		\begin{equation}
		\begin{aligned}
		\frac{\partial \by}{\partial \btheta(0)}\Big|_{t_{\by}}\cdot\bn_p &= 0, \\
		\frac{\partial \btheta}{\partial \btheta(0)}\Big|_{t_{\by}}\cdot\bn_p &= 0,
		\end{aligned}
		\end{equation}
		by Lemma~\ref{lm:5}, we must have $\bn_{\bp}=\bzero$, which is a contradiction. Therefore either $\partial_{\bzeta}T$ or $\partial_{\bzeta}\bx$ or $\partial_{\bzeta}\bxi$ must have a discontinuity at $\bxi^{(0)}$.
	\end{enumerate}
\end{proof}
With these assumptions, we can directly detect non-broken rays from the measurements by scanning the traveltimes, exiting directions and exiting locations for jumps in the derivatives with respect to initial directions, see Figure~\ref{fig:jumps}. And using these non-broken rays enables us to recover the metric outside of the convex hull of the obstacle~\cite{krishnan2009support,ilmavirta2014broken}.
\begin{figure}[!htb]
	\centering
	\includegraphics[scale=0.14]{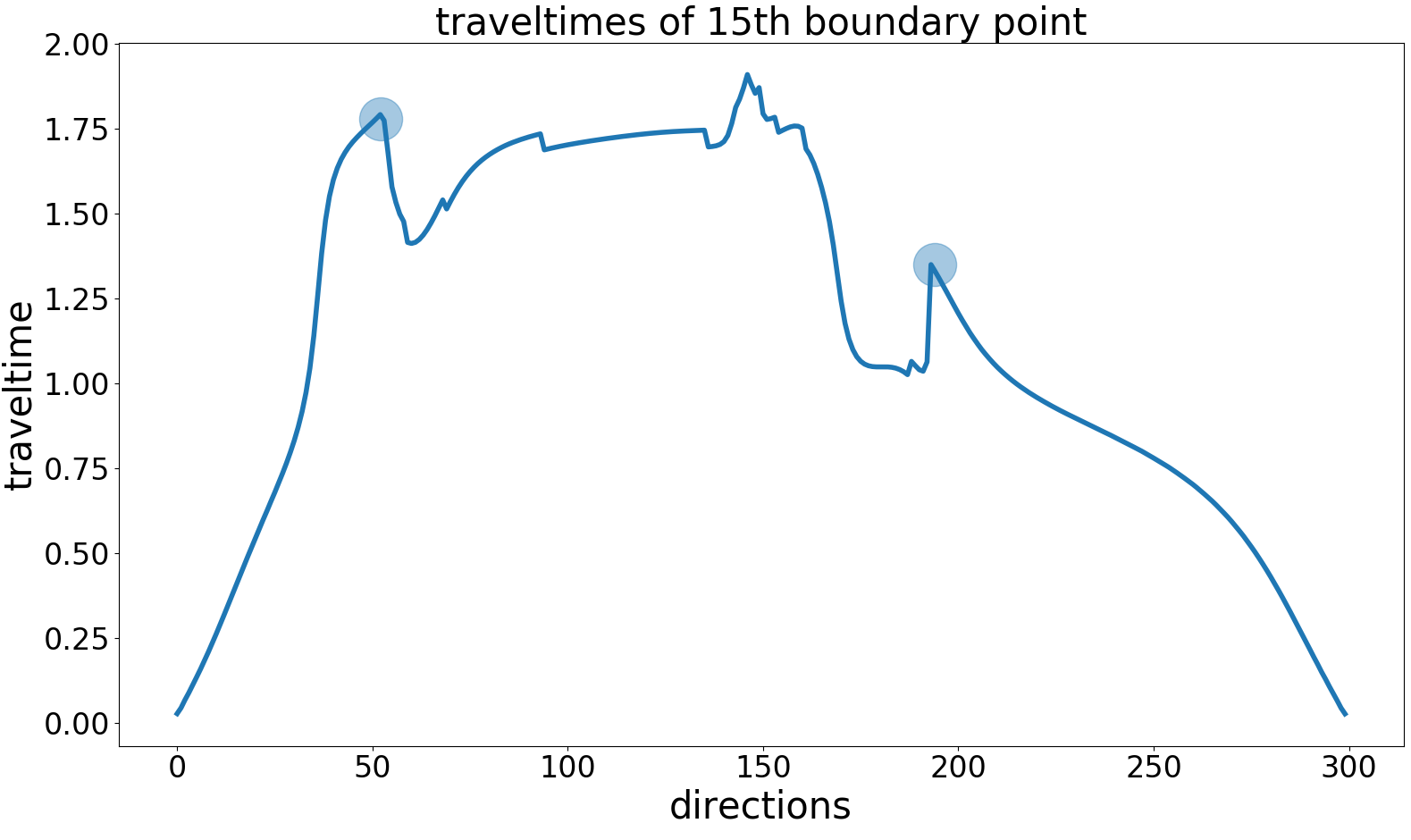}
	\includegraphics[scale=0.14]{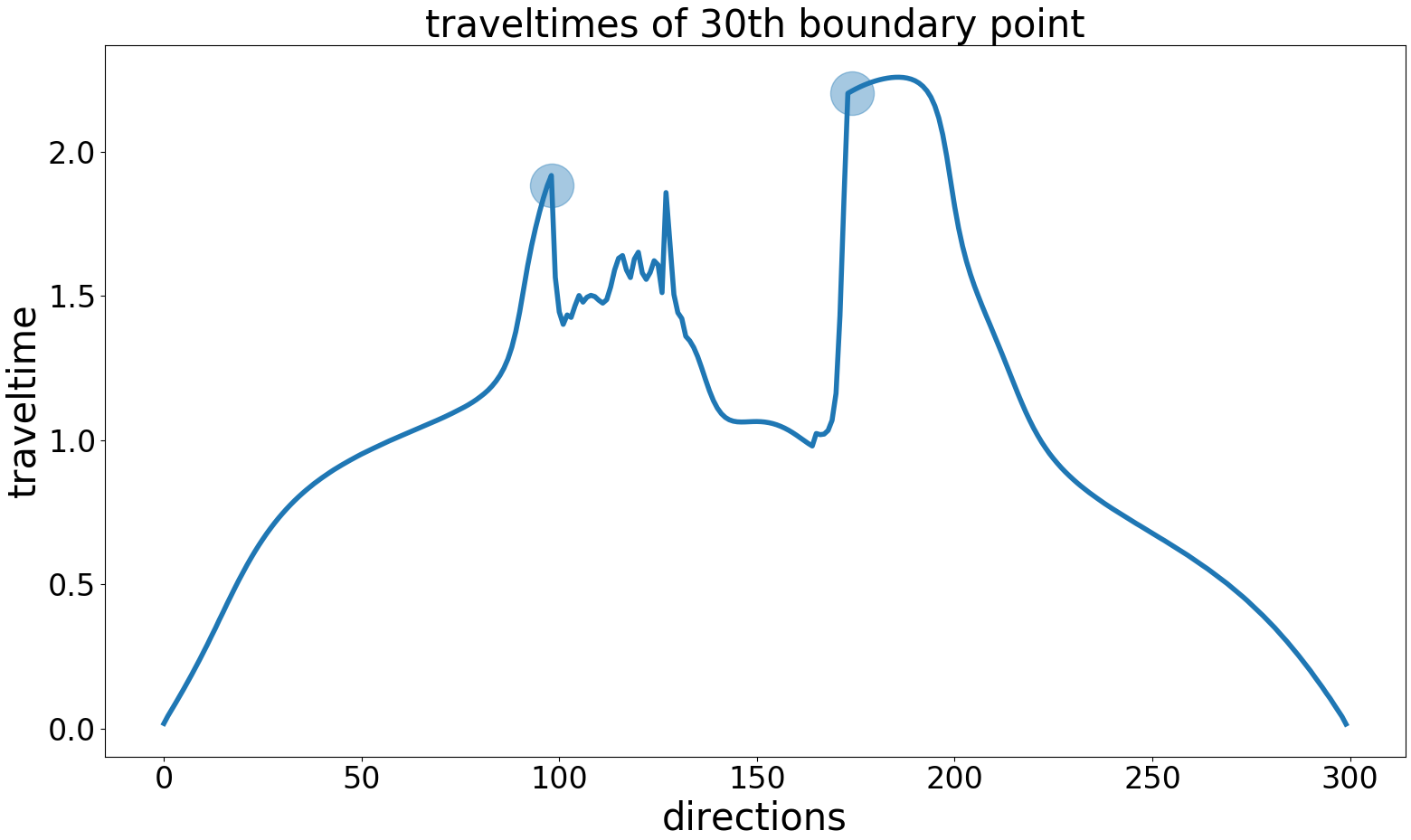}\\
		\includegraphics[scale=0.14]{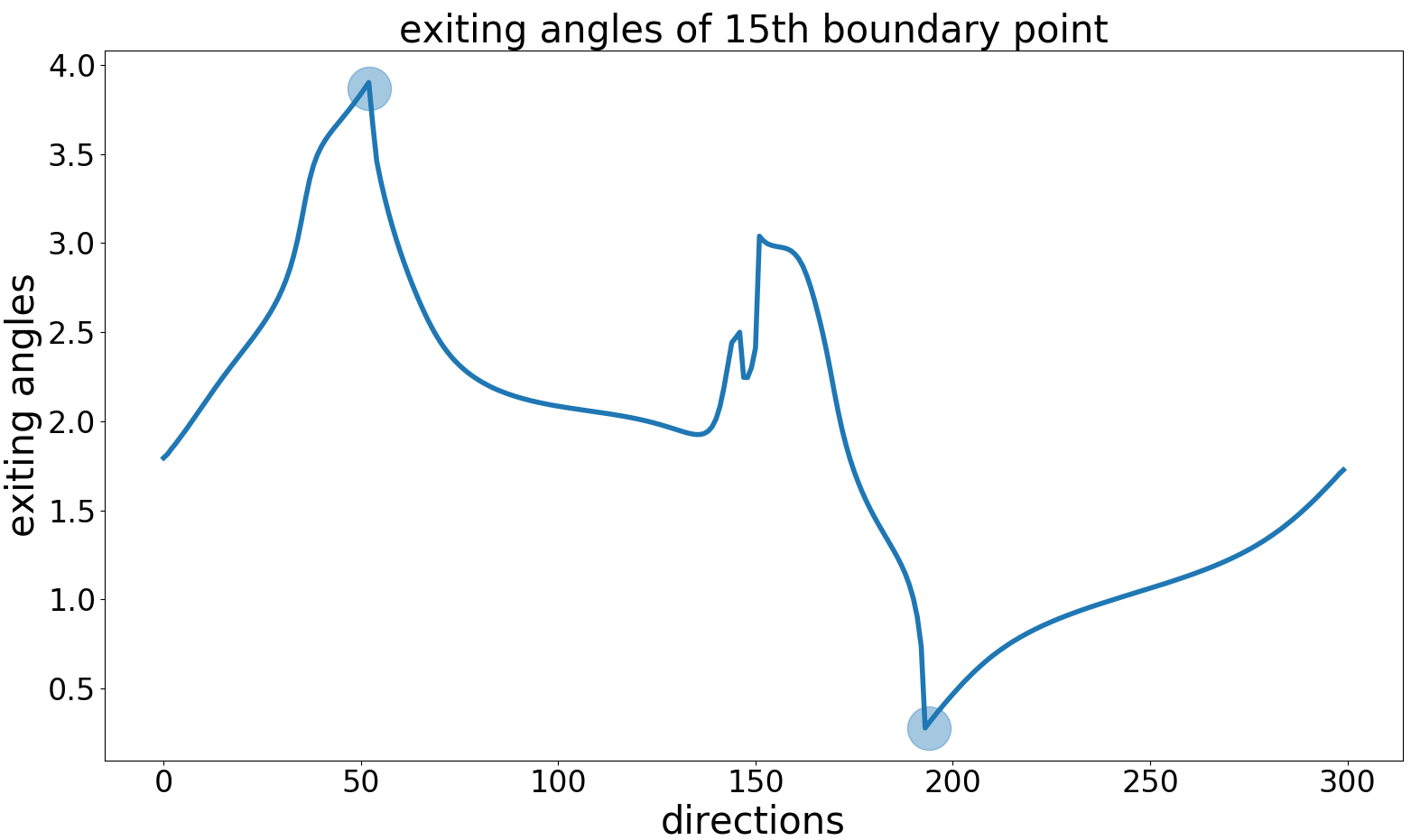}
		\includegraphics[scale=0.14]{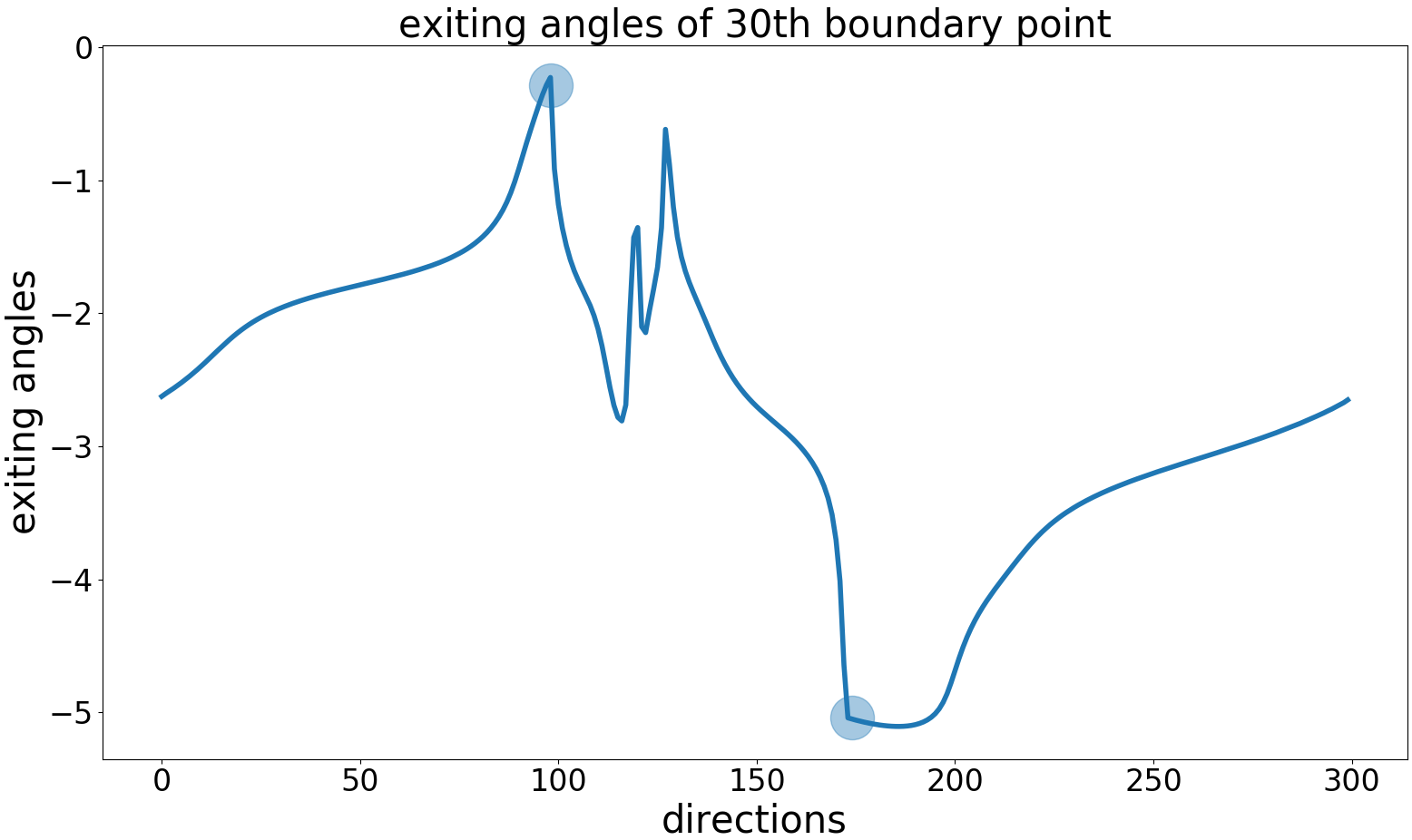}\\
			\includegraphics[scale=0.14]{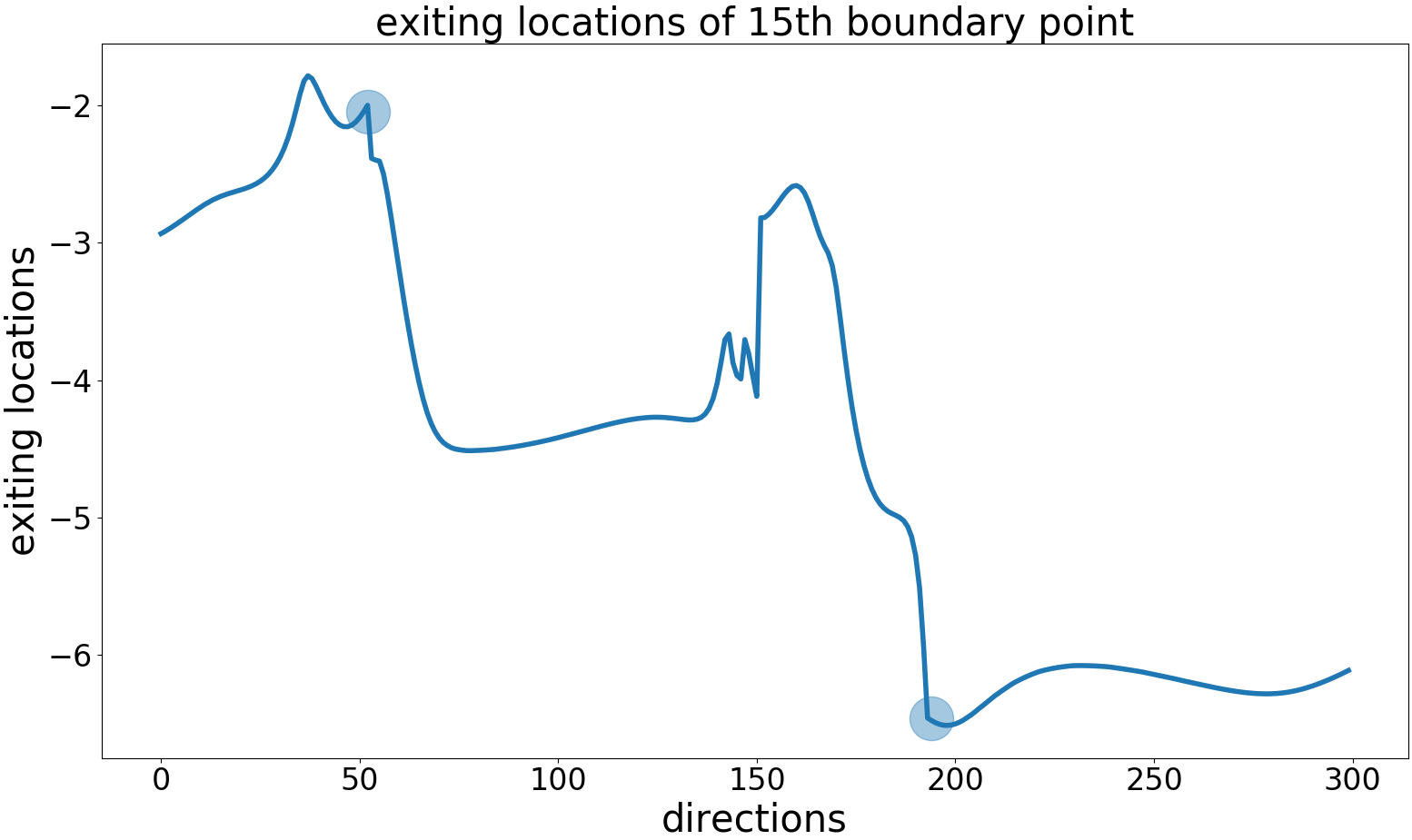}
			\includegraphics[scale=0.14]{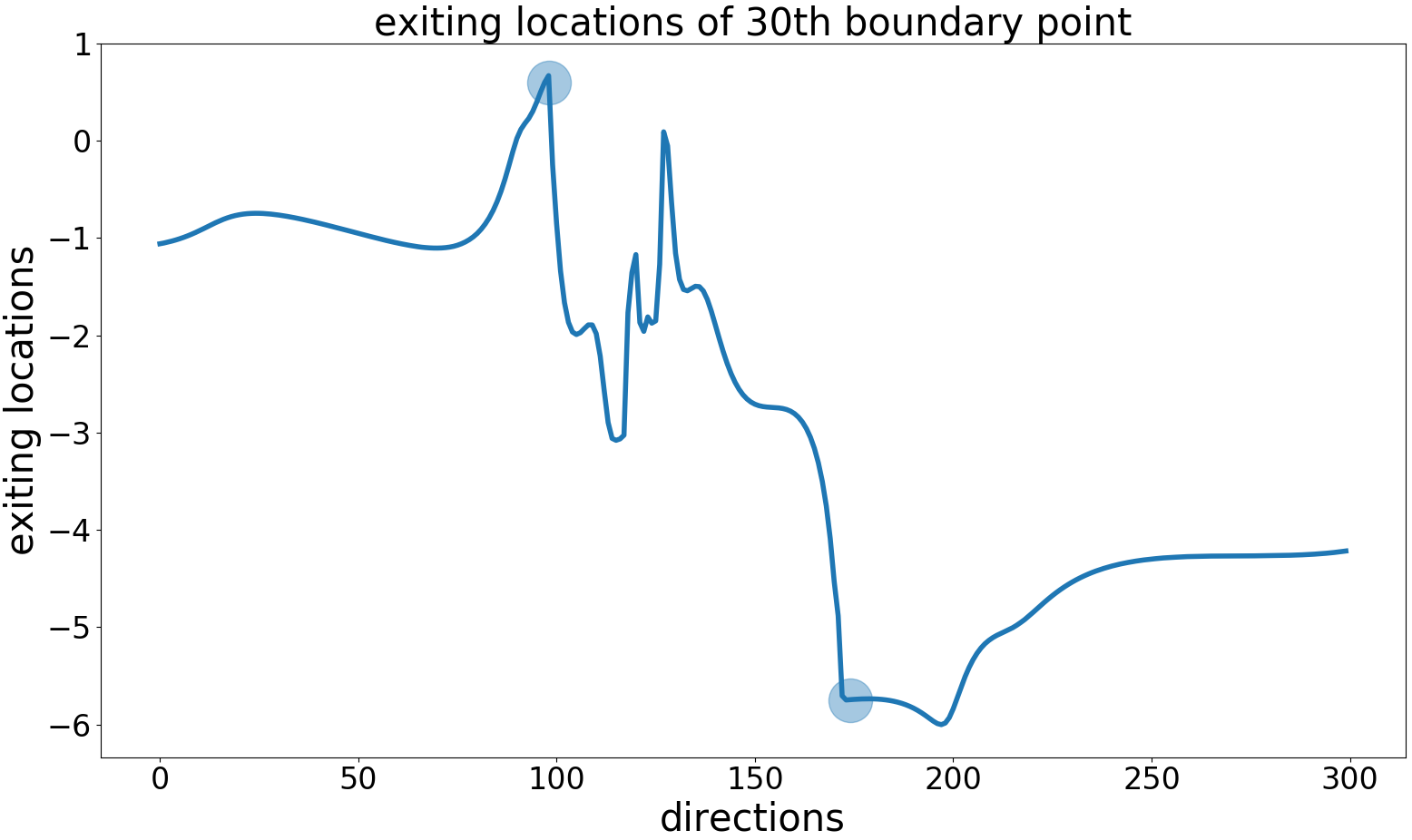}
	\caption{The plots of traveltimes, exiting directions, exiting locations from numerical example~\ref{num:5}. Left: From top to bottom, the plots are the traveltimes, exiting directions, exiting locations corresponding to the initial phases with varying directions at the \nth{15} boundary point, the shadowed spots are placed at the detected jumps near \nth{52} and \nth{195} rays respectively. Right: From top to bottom, the plots are the traveltimes, exiting directions, exiting locations corresponding to the initial phases with varying directions at the \nth{30} boundary point, the shadowed spots are placed at the detected jumps near \nth{98} and \nth{175} rays respectively.}
	\label{fig:jumps}
\end{figure}

\subsection{Reconstruction of metric and obstacle}
In this section, we present the hybrid method for reconstructing both the metric and included obstacles.  First we find out if there exists an obstacle inside the medium by checking if the set $T$ defined in \eqref{eq:broken} is empty. 

If no obstacle is detected, then we can use the improved adaptive phase space method in Section~\ref{sec:stab-adaptive} to recover the metric efficiently and stably with the layer-stripping strategy. 

Once an obstacle is detected, then we distinguish the broken rays and non-broken ones by scanning the exiting traveltimes, locations and directions in scattering relation explained in Section~\ref{sec:non-broken-detect} and use the improved adaptive phase space method in Section~\ref{sec:stab-adaptive} on non-broken rays to recover the metric. In this case, our layer-stripping reconstruction strategy will be able to recover the metric starting from the boundary and continuing inward all the way to the convex hull of the obstacle. Since our adaptive phase space method is based on optimization formulation with a regularization  \eqref{eq:regularize} for the metric at all grid points in the domain, the numerically reconstructed metric in the whole domain can be viewed as a good approximation of the true metric outside the convex hull of the obstacle plus a harmonic extension to the interior of the convex hull.
	
If the obstacle is convex (under the metric), one can reconstruct both the metric and the obstacle using non-broken rays as we can see from the numerical experiments in Section~\ref{sec:num}. Since non-broken rays only contain information outside the convex hull of the obstacle, it is impossible to reconstruct a concave obstacle with only non-broken rays.
	
On the other hand, since our adaptive phase method using non-broken rays reconstructs the metric on the whole domain, then by tracing back the rays in  $T$ (defined in \eqref{eq:broken}) to half of the traveltime, we will get the approximated reflection points of such rays, if the true metric varies slowly inside the convex hull. This gives us a direct imaging method for the boundary of the obstacle, see the numerical experiments in Section~\ref{sec:num}. 

For the cases that the metric has \emph{large} variations inside the convex hull, our method then can not recover the obstacle and metric inside the convex hull without using other broken rays. One possible way is to introduce an representation of the obstacle's boundary and iteratively  reconstruct the metric inside the convex hull as well as morph the boundary simultaneously to minimize the mismatch, e.g., using the result from our hybrid method as an initial guess. However, this will be a daunting task due to the highly non-convex and coupled optimization problem.
	
Finally, we briefly discuss the computational cost of our hybrid method in 2D. Suppose we have $N_s$ sources and each source probes $N_a$ directions, then there are $N_sN_a$ scattering relation measurements. For obstacle detection, it will take $O(N_s N_a)$ complexity at the worst case. For non-broken rays detection, it will take $O(N_s N_a)$ complexity due to linear scan. At each iteration of the stabilized adaptive phase space method, we have to solve the Hamiltonian system for $X_g(s, X^{(0)})$ in~\eqref{eq:hamiltonian} and Jacobian matrix~\eqref{eq:jacobian}, which has the worst complexity as $O(T_{\max}N_s N_a)$, where $T_{\max}$ is length of the longest geodesic. These solutions are then used to calculate mismatch and formulate the linearized Stefanov-Ulhmann identity~\eqref{eq:linearized-su} over the Eulerian grid in~\eqref{eq:linear-eq} with the worst complexity $O(T_{\max}N_sN_a)$. Then we use the standard multifrontal solver \texttt{umfpack} to solve the perturbation for the minimization problem~\eqref{eq:regularize}, the complexity is $O(n^3)$ in general, where $n$ is the number of unknowns. Therefore the total time complexity is $O(K( T_{\max}N_sN_a  + n^3))$, where $K$ is the number of iterations. 
\section{Numerical experiments}\label{sec:num}
All numerical experiments are implemented in \texttt{Julia} and performed on a dual-core laptop of $2.7\texttt{GHz}$ CPU and $16\texttt{GByte}$ memory. Source code is hosted on \href{https://github.com/lowrank/ray}{https://github.com/lowrank/ray}.

We take the physical domain $\Omega$ as unit disk for all examples. The discretization of  metric $g$ over a uniform grid is parametrized by $\texttt{Q4}$ element. If a ray passes through a grid, then it will involve $12$ surrounding grid values, under such situation we can set rank threshold $r_{\min} = 12$. For other parameters, our selections are conservative, we take $\tau = 5\%$ and $\alpha = 10$ for fidelity function updating, and regularization parameter $\beta = 0.5$ (see Section \ref{sec:stab-adaptive}), the numerical tolerance $\epsilon=0.5\%$ for obstacle detection (see Section \ref{sec:ref-detect}). We keep them fixed for all of the examples. 

\subsection{Scenario 1: no obstacle}
In this scenario, we experiment our improved adaptive phase space method  described in Section \ref{sec:stab-adaptive} on simple cases without interior obstacle.
\subsubsection{Example 1}
The exact solution is $$c(x,y) = 1+0.3\sin(\pi x)\sin(\pi y).$$ The grid's resolution is $h = 1/15$. We put $50$ equispaced sources and each source probes $100$ uniformly distributed directions. The method converges to a solution with relative $L^2$ error $2.41\times10^{-3}$ at $11$th iteration. We plot the numerical and the exact solutions in Figure~\ref{fig:ex1}.
\begin{figure}[!htb]
	\centering
	\includegraphics[scale=0.35]{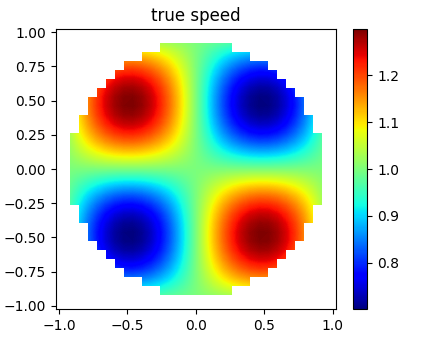}
	\includegraphics[scale=0.35]{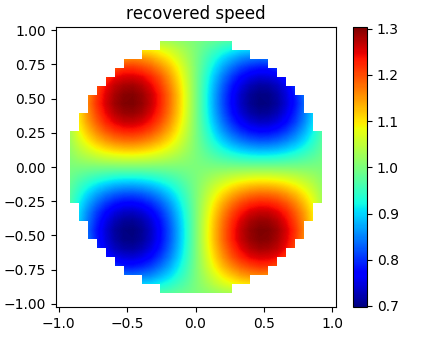}
	\includegraphics[scale=0.35]{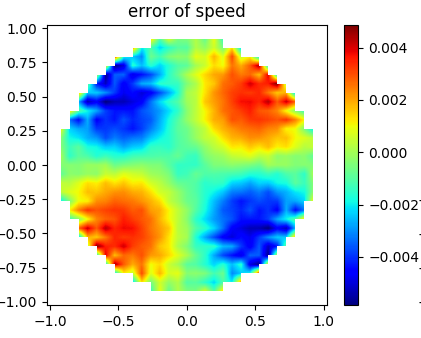}
	\caption{Left: the exact solution. Middle: the numerical solution at $11$th iteration. Right: the error between numerical solution and exact solution.}
	\label{fig:ex1}
\end{figure}
\subsubsection{Example 2}
The exact solution is $$c(x,y) = 1+0.3\sin(1.5\pi x)\sin(1.5\pi y).$$
The grid's resolution is $h = 1/25$. We put $100$ equispaced sources and each source probes $100$ uniformly distributed directions. The method converges to a solution with relative $L^2$ error $3.07\times 10^{-3}$ at $22$th iteration. We plot the numerical and the exact solutions in Figure~\ref{fig:ex2}.
\begin{figure}[!htb]
	\centering
	\includegraphics[scale=0.35]{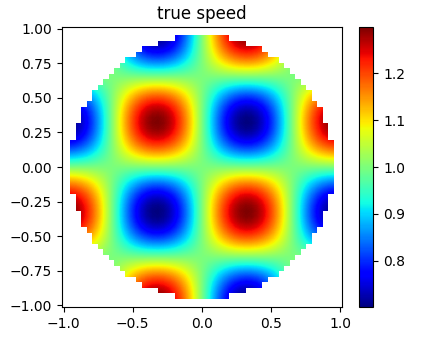}
	\includegraphics[scale=0.35]{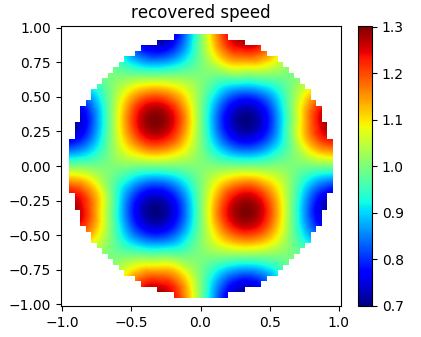}
	\includegraphics[scale=0.35]{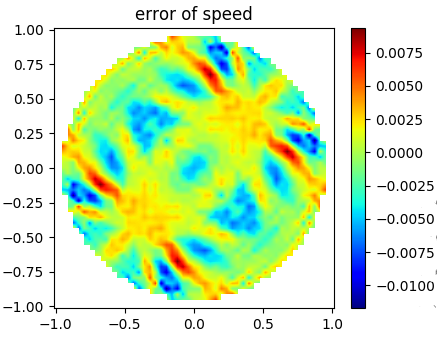}
	\caption{Left: the exact solution. Middle: the numerical solution at $22$th iteration. Right: the error between numerical solution and exact solution.}
	\label{fig:ex2}
\end{figure}
We also plot the auxiliary fidelity function at three different iterations to illustrate the layer stripping process in Figure~\ref{fig:ex2-fidelity}.
\begin{figure}[!htb]
	\centering
	\includegraphics[scale=0.38]{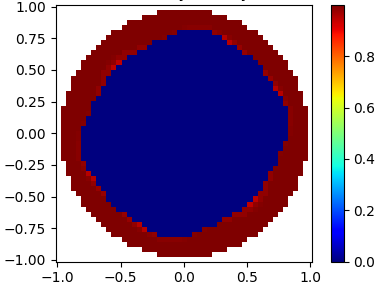}
	\includegraphics[scale=0.38]{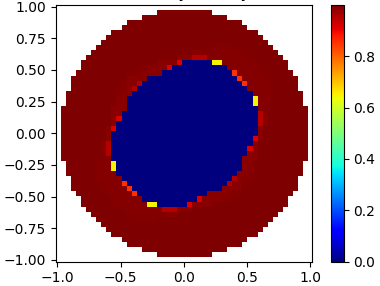}
	\includegraphics[scale=0.38]{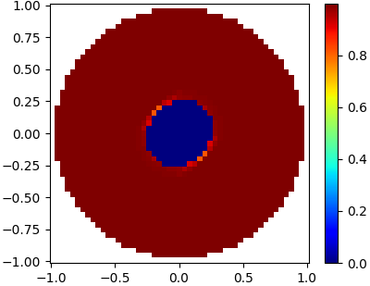}
	\caption{The fidelity function $p^n$ in different iterations. From left to right: $6$th, $11$th, $17$th iteration.}
	\label{fig:ex2-fidelity}
\end{figure}

\subsection{Scenario 2: convex unknown obstacle}
In this scenario, we experiment our hybrid method for imaging an unknown convex obstacle inside an unknown metric. 
\subsubsection{Example 3}\label{sec:ex3}
We consider the obstacle as a circle at center
$$x^2 + y^2 = \frac{1}{16},$$
and the exact solution is given by
$$c(x,y) = 1+0.4\sin\left(\pi \sqrt{(x-0.5)^2 + (y-0.2)^2}\right) + 0.4 \sin\left(\pi \sqrt{(x+0.4)^2 + (y+0.3)^2}\right).$$
The grid's resolution is $1/15$. We put $50$ equispaced sources and each source probes $300$ uniformly distributed directions. We first detect the obstacle by checking the scattering relation as in Section~\ref{sec:ref-detect}, and then distinguish the non-broken rays from the broken rays as in Section~\ref{sec:non-broken-detect}, see Figure~\ref{fig:ex3-ortho}.
\begin{figure}[!htb]
	\centering
	\includegraphics[scale=0.35]{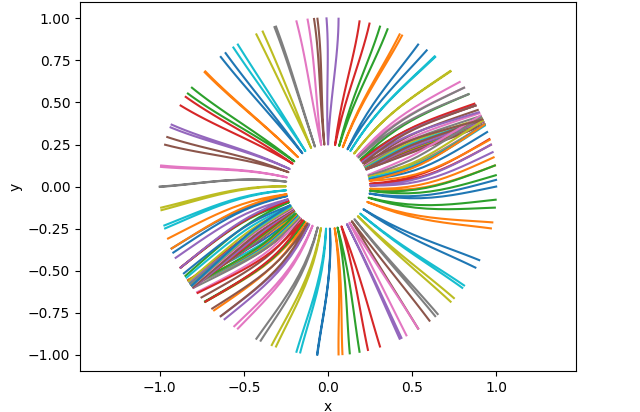}
	\includegraphics[scale=0.35]{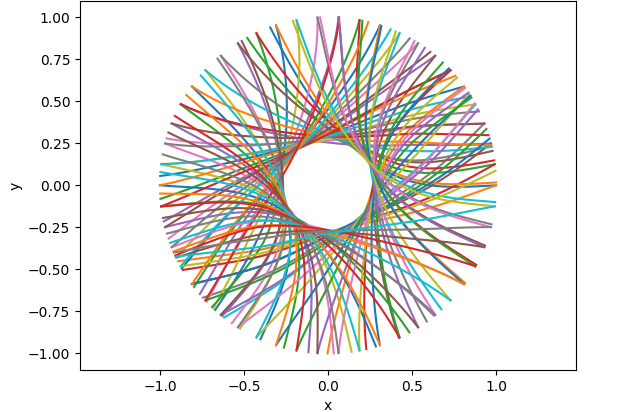}
	\caption{Left: The broken rays in $T$, which hit the obstacle in (nearly) normal direction in Example 3. Right: Detected tangent rays in Example 3.}
	\label{fig:ex3-ortho}
\end{figure}
And then we use all the non-broken rays to reconstruct the metric by the stabilized adaptive phase space method, the method converges to a solution with relative $L^2$ error $4.93\times 10^{-3}$ at $9$th iteration. We plot the numerical and exact solutions in Figure~\ref{fig:ex3}. 
\begin{figure}[!htb]
	\includegraphics[scale=0.35]{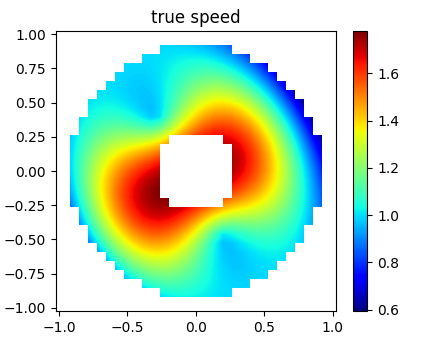}
	\includegraphics[scale=0.35]{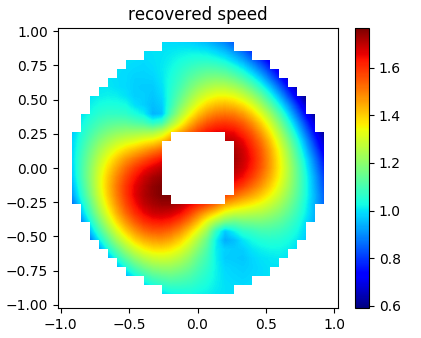}
	\includegraphics[scale=0.35]{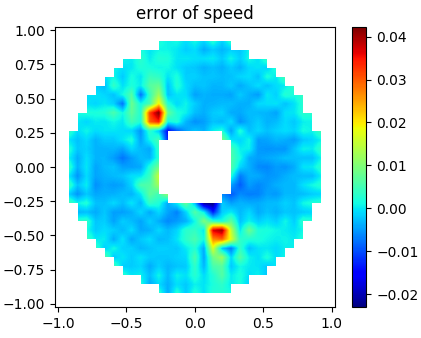}
	\caption{Left: exact solution. Middle: numerical solution at $9$th iteration. Right: error between numerical solution and exact solution.}
	\label{fig:ex3}
\end{figure}
From the experiment, we can see that the metric outside of the obstacle has been recovered well. Then the obstacle's convex hull can be approximated by the envelope of all the tangent rays computed through the recovered metric, see Figure~\ref{fig:ex3-convex-hull-rec}. After the reconstruction of the convex hull of the obstacle and the metric outside the convex hull, we trace the rays in collection $T$ (defined in \eqref{eq:broken}) to half of the traveltime to get the reflection points on the boundary, see also in Figure~\ref{fig:ex3-convex-hull-rec}. We can see that the computed reflection points are quite close to the boundary.

\begin{figure}
	\centering
	\includegraphics[scale=0.35]{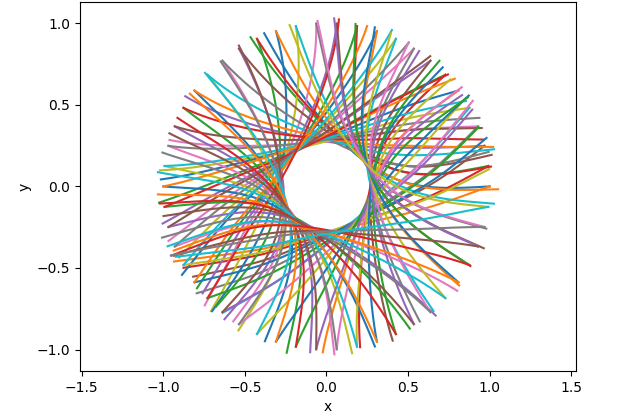}
	\includegraphics[scale=0.35]{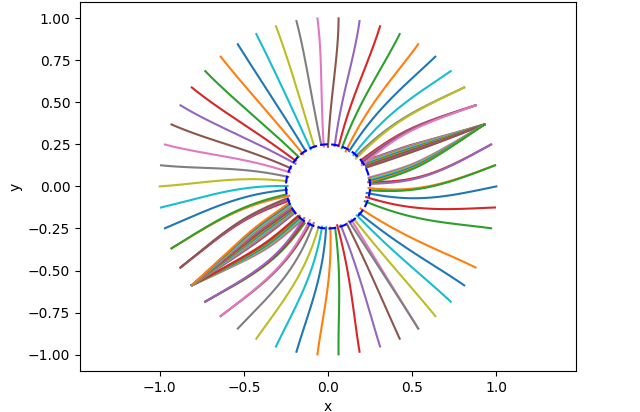}
	\caption{Left: The tangent rays computed from recovered metric in Example 3. Right: Traced the rays in collection $T$ to half traveltime, the dashed blue circle at center is the exact boundary.}
	\label{fig:ex3-convex-hull-rec}
\end{figure}

\subsection{Scenario 3: non-convex unknown obstacle}
In this scenario, we will use our hybrid method to recover a non-convex unknown interior obstacle and the underlying metric.
\subsubsection{Example 4}
In this example, we consider an easier case. The obstacle's boundary is parameterized in polar coordinate $(r, \theta)$ as
$$r(\theta) = 0.25 + 0.05\sin(3\theta),$$
which is a \emph{slightly} concave shape, the exact solution is given by
$$c(x,y) = 1+0.4\sin\left(\pi \sqrt{(x-0.5)^2 + (y-0.2)^2}\right) + 0.4 \sin\left(\pi \sqrt{(x+0.4)^2 + (y+0.3)^2}\right).$$
The grid's resolution is $1/15$. We put $50$ equispaced sources and each source probes $300$ uniformly distributed directions.  From the scattering relation, we can directly extract the rays that hit the obstacle in almost normal direction, and also distinguish the non-broken rays by detecting the jumps in scattering relation. We plot those rays in Figure~\ref{fig:ex4-shape}.
\begin{figure}[!htb]
	\centering
	\includegraphics[scale=0.35]{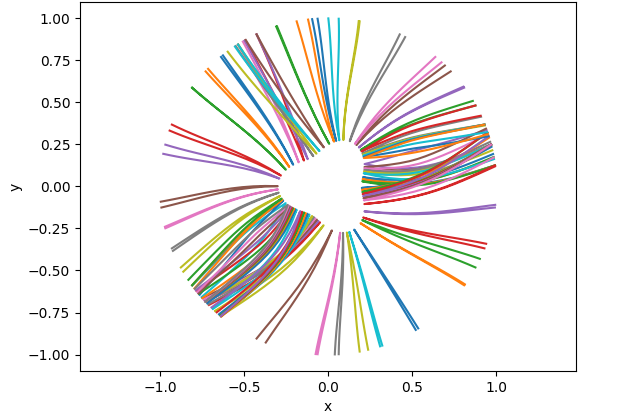}
	\includegraphics[scale=0.35]{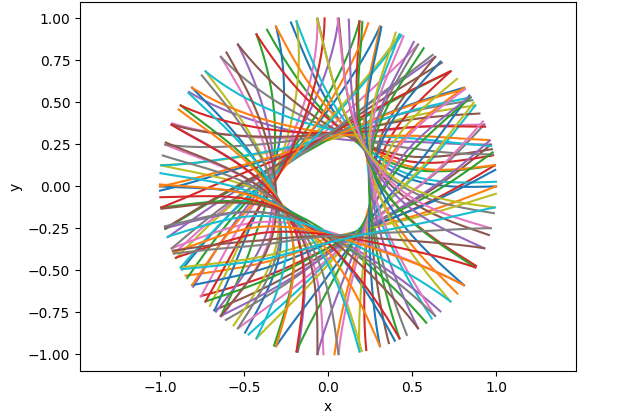}
	\caption{Left: The rays that hit the obstacle in (nearly) normal direction in Example 4. Right: The tangent rays detected from scattering relation.}
	\label{fig:ex4-shape}
\end{figure}
Then we follow the method in Section~\ref{sec:non-broken-detect} to distinguish the  non-broken rays and broken rays. Then we use all the non-broken rays to recover the metric outside the obstacle by the stabilized adaptive phase space method. The method converges to a solution with relative $L^2$ error $5.58\times 10^{-3}$ at $9$th iteration. We plot the numerical and exact solutions in Figure~\ref{fig:ex4}. 

\begin{figure}[!htb]
	\includegraphics[scale=0.35]{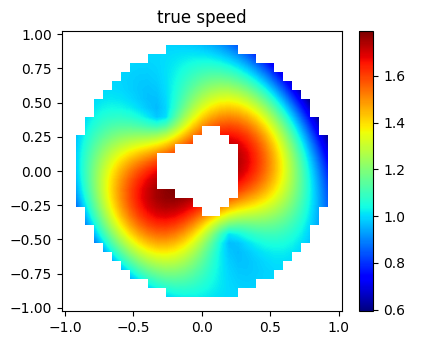}
	\includegraphics[scale=0.35]{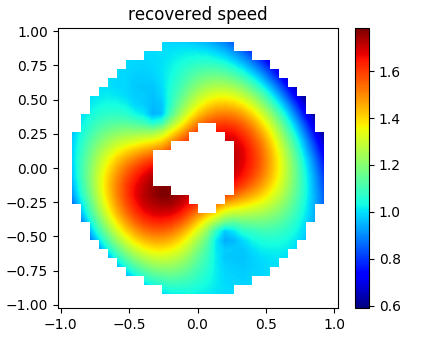}
	\includegraphics[scale=0.35]{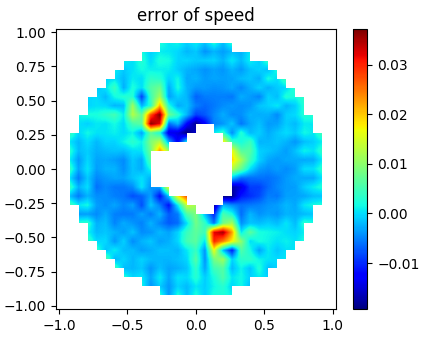}
	\caption{Left: exact solution. Middle: numerical solution at $9$th iteration. Right: error between the numerical solution and the exact solution.}
	\label{fig:ex4}
\end{figure}
Since the error of metric is small outside the obstacle, then the obstacle's convex hull can be approximated well by all the tangent rays computed through the recovered metric, see Figure~\ref{fig:ex4-nonbroken-rec}. By tracing back the rays in $T$, we approximately obtain the reflection points on the obstacle, also see Figure~\ref{fig:ex4-nonbroken-rec}. However, since no information is available inside the convex hull, the error of reflection points can be large in general.
\begin{figure}
	\centering
	\includegraphics[scale=0.35]{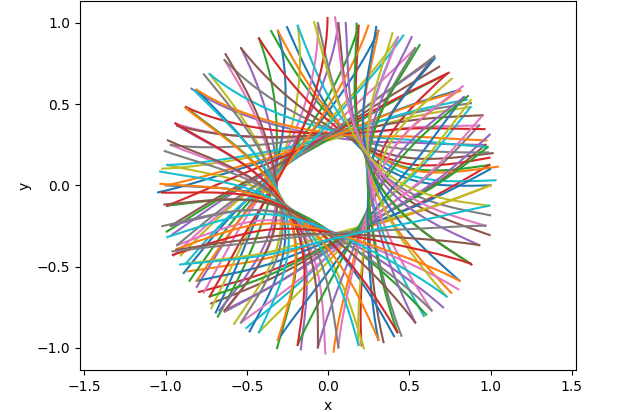}
	\includegraphics[scale=0.35]{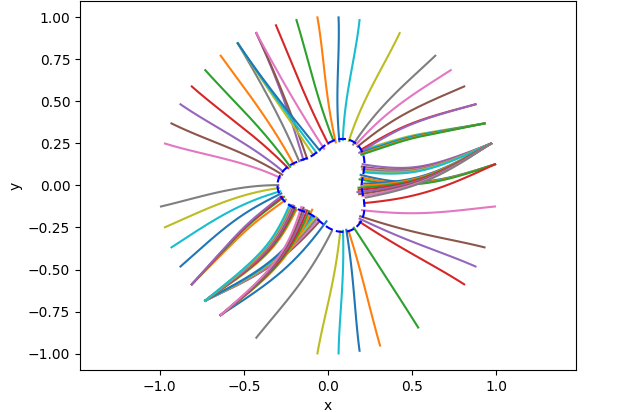}
	\caption{Left: The tangent rays computed from recovered metric in Example 4. Right: The traced rays in collection $T$ to half of traveltime. The blue dashed line is the exact boundary of obstacle.}
	\label{fig:ex4-nonbroken-rec}
\end{figure}
\subsubsection{Example 5}\label{num:5}
In this example, we take a more challenging obstacle. The obstacle's boundary is parameterized in polar coordinate $(r, \theta)$ as
$$r(\theta) = 0.4 + 0.2\sin(3\theta),$$
which is a \emph{more} concave shape than previous example,
and the exact solution is again given by
 $$c(x,y) = 1+0.4\sin\left(\pi \sqrt{(x-0.5)^2 + (y-0.2)^2}\right) + 0.4 \sin\left(\pi \sqrt{(x+0.4)^2 + (y+0.3)^2}\right).$$
The grid's resolution is $1/15$. We put $50$ equispaced sources and each source probes $300$ uniformly distributed directions.  From the scattering relation, we can directly extract the rays that hit the obstacle in almost normal direction, and also distinguish the non-broken rays by detecting the jumps in scattering relation. We plot such rays in Figure~\ref{fig:ex5-rays}.
\begin{figure}[!htb]
	\centering
	\includegraphics[scale=0.35]{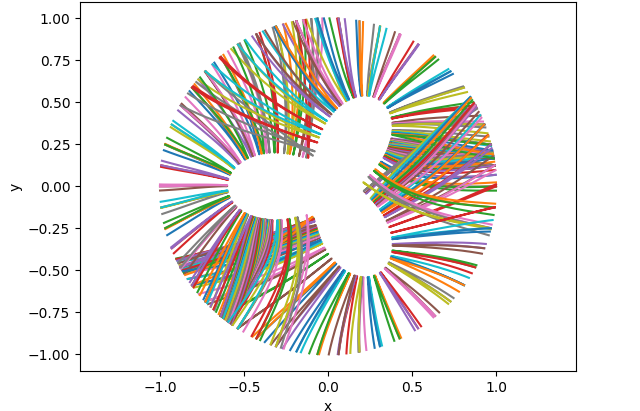}
	\includegraphics[scale=0.35]{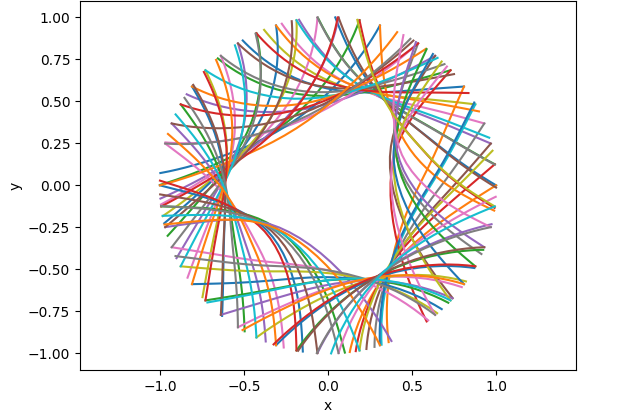}
	\caption{Left: The rays that hit the obstacle in (nearly) normal direction in Example 5. Right: The tangent rays detected from scattering relation.}
	\label{fig:ex5-rays}
\end{figure}
Then we use the stabilized adaptive phase space method in Section~\ref{sec:stab-adaptive} on all the non-broken rays to recover the metric as much as possible. The method converges to a solution with relative $L^2$ error of $2.90\times 10^{-2}$ at $9$th iteration. We plot the numerical and exact solutions in Figure~\ref{fig:ex5}.
\begin{figure}[!htb]
	\includegraphics[scale=0.35]{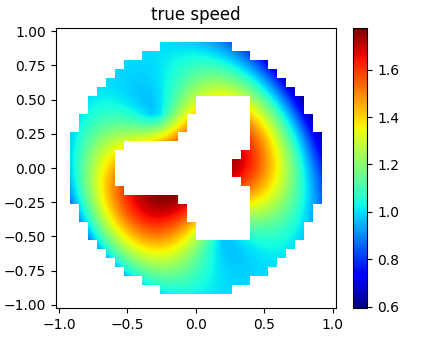}
	\includegraphics[scale=0.35]{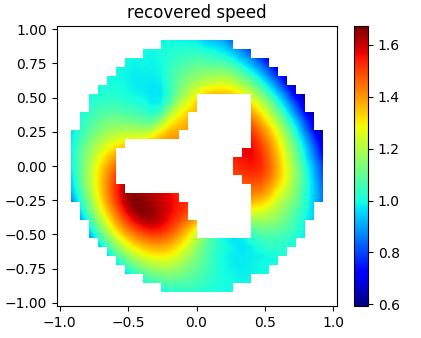}
	\includegraphics[scale=0.35]{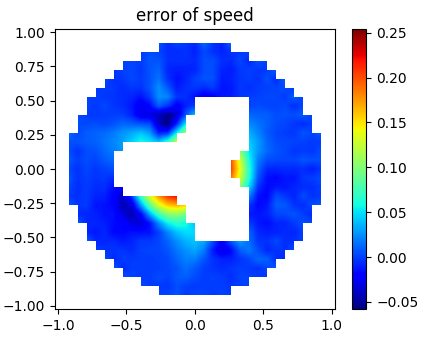}
	\caption{Left: exact solution. Middle: numerical solution at $9$th iteration. Right: error between the numerical solution and the exact solution.}
	\label{fig:ex5}
\end{figure}
After having recovered the metric from the non-broken rays' scattering relation, we can approximate the convex hull of the obstacle by the recovered non-broken rays, see Figure~\ref{fig:ex5-nonbroken-rec}. And by tracing the rays in collection $T$, we can approximately obtain the reflection points on the boundary of the obstacle, also see Figure~\ref{fig:ex5-nonbroken-rec}.  
\begin{figure}[!htb]
	\centering
	\includegraphics[scale=0.35]{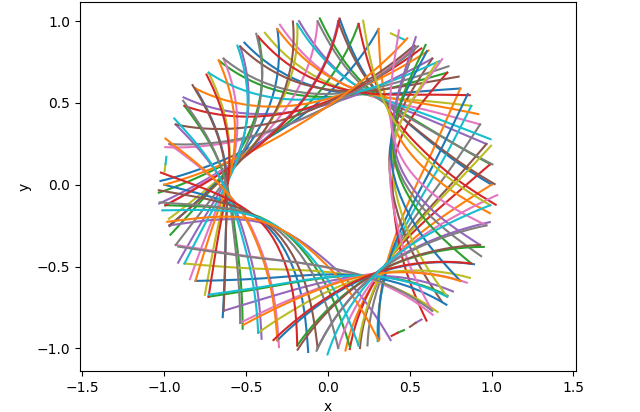}
	\includegraphics[scale=0.35]{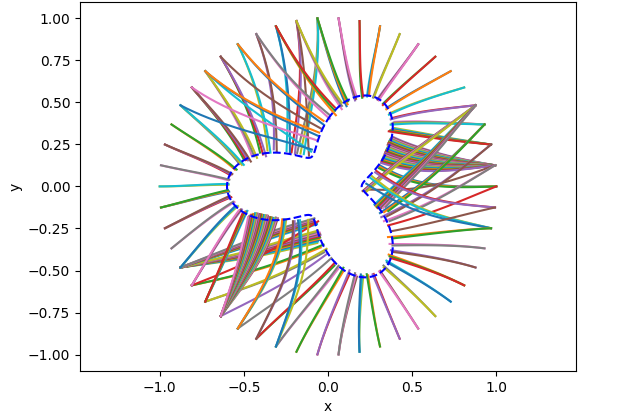}
	\caption{Left: The tangent rays computed from recovered metric in Example 5. Right: The traced rays in collection $T$ to half of traveltime. The blue dashed line is the exact boundary of obstacle.}
	\label{fig:ex5-nonbroken-rec}
\end{figure}
\section{Conclusion}
In this work, we proposed a hybrid phase space method for traveltime tomography which includes both an unknown medium and unknown scatterer. The underlying medium outside the convex hull of the scatterer is reconstructed by a optimization based iterative method. The newly developed method is more stable than the previous adaptive phase method proposed in  \cite{chung2011adaptive} due to the introduction of an auxiliary fidelity function to guide the layer stripping process and a direct detection of all non-broken rays. To image the boundary of the scatterer, we use a direct imaging method that can locate points on the boundary of the scatterer by selecting those broken-once rays that hit the scatterer almost normally and  tracing back those 
rays to half traveltime in the reconstructed medium. 
\section*{Acknowledgement}
H. Zhao is partially supported by NSF grant DMS-1418422. Both authors would like to thank ICERM 2017 Fall program on Mathematical and Computational Challenges in Radar and Seismic Reconstruction, where this project was started. The authors also would like to thank Kui Ren for valuable discussions.
\bibliographystyle{unsrtnat}
\bibliography{main}
	
 \end{document}